\setlist{itemsep=1pt,parsep=0pt,topsep=2pt,partopsep=0pt}  
\def\itm#1{\rm ({#1})} 
\def\itmit#1{\itm{\it #1\,}} 
\def\rom{\itmit{\roman{*}}}
\def\endofFact{\hfill\scalebox{.6}{$\Box$}}
\let\subset\subseteq  
\let\eps\varepsilon 
\let\rho\varrho 
\newcommand{\rev}[1]{\overleftarrow{#1}}
\def\cE{\mathcal{E}}
\def\cF{\mathcal{F}}
\def\cG{\mathcal{G}}
\def\cH{\mathcal{H}}
\def\cP{\mathcal{P}}
\def\PP{\mathbb{P}}
\def\EE{\mathbb{E}}
\newtheorem{theorem}{Theorem}
\newtheorem{lemma}[theorem] {Lemma}
\newtheorem{definition}[theorem] {Definition}  
\newtheorem{claim}[theorem] {Claim}  
\theoremstyle{remark}
\newcommand{\oldqed}{}
\newenvironment{claimproof}[1][Proof]{
  \renewcommand{\oldqed}{\qedsymbol}
  \renewcommand{\qedsymbol}{\endofFact}
  \begin{proof}[#1]
}{
  \end{proof}
  \renewcommand{\qedsymbol}{\oldqed}
} 
\newcommand{\bigO}{\mathcal{O}}
\newcommand{\Prob}{\mathbb{P}}
\newcommand{\Exp}{\mathbb{E}}
\newcommand{\mult}{\text{mult}}
\newcommand{\Gr}[1][r]{\cG^{(#1)}}
\newcommand{\Pres}{P_\mathrm{res}}
\newcommand{\Palm}{P_\mathrm{almost}}
\newcommand{\EMAIL}[1]{  \textit{E-mail}: \texttt{#1} }
\newcommand{\tpl}[1]{\mathbf{#1}}
\title{Finding tight Hamilton cycles in random hypergraphs faster}
  \author[P. Allen]{Peter Allen*}
  \thanks{
    *
    Department of Mathematics, London School of Economics, Houghton Street,
London WC2A 2AE, U.\ K.\
   \EMAIL{p.d.allen@lse.ac.uk}
 }
  \author[C. Koch]{Christoph Koch\dag}
\thanks{
    \dag 
    Mathematics Institute, University of Warwick, Zeeman Building, Coventry CV4~7AL, U.\ K.\
   \EMAIL{c.koch@warwick.ac.uk}
 }
  \author[O. Parczyk]{Olaf Parczyk\ddag}
  \author[Y. Person]{Yury Person\ddag}
  \thanks{
    \ddag  
Institut f\"ur Mathematik, Goethe Universit\"at, Robert-Mayer-Str. 6-10 60325 Frankfurt am Main, Germany. 
   \EMAIL{parczyk|person@math.uni-frankfurt.de}
  }
  \thanks{
    The second author was supported by Austrian Science Fund (FWF): P26826, and European Research Council (ERC): No.~639046.
    The third and fourth authors were supported by DFG grant PE 2299/1-1.
  }
\date{\today}
\begin{document}
\begin{abstract}
 In an $r$-uniform hypergraph on $n$ vertices a tight Hamilton cycle consists of $n$ edges such that there exists a cyclic ordering of the vertices where the edges correspond to consecutive segments of $r$ vertices.
 We provide a first deterministic polynomial time algorithm, which finds a.a.s.\ tight Hamilton cycles in random $r$-uniform hypergraphs with edge probability at least $C \log^3n/n$.
 
 Our result partially answers a question of Dudek and Frieze [Random Structures \& Algorithms  42 (2013), 374--385] who proved that tight Hamilton cycles exists already for $p=\omega(1/n)$ for $r=3$ and $p=(e + o(1))/n$ for $r\ge 4$ using a second moment argument.
 Moreover our algorithm is superior to previous results of Allen, B\"ottcher, Kohayakawa and Person [Random Structures \& Algorithms 46 (2015), 446--465] and Nenadov and {\v{S}}kori{\'c} [arXiv:1601.04034] in various ways: the algorithm of Allen et al.\ is a randomised polynomial time algorithm working for edge probabilities $p\ge n^{-1+\eps}$, while the algorithm of Nenadov and {\v{S}}kori{\'c} is a randomised quasipolynomial time algorithm working for edge probabilities $p\ge C\log^8n/n$.
\end{abstract}
\maketitle


\section{Introduction}\label{sec:intro}
The Hamilton Cycle Problem, i.e., deciding whether a given graph contains a Hamilton cycle, is one of the $21$ classical $\mathrm{NP}$-complete problems due to  Karp~\cite{karp1972reducibility}.
The best currently known algorithm is due to Bj\"orklund~\cite{bjorklund2010determinant}: a Monte-Carlo algorithm with worst case running time $\bigO^*(1.657^n)$,\footnote{Writing $\bigO^*$ means we ignore polylogarithmic factors.} without false positives and false negatives occurring only with exponentially small probability.
But what about ``typical'' instances? In other words, when the input is a random graph sampled from some specific distribution, is there an algorithm which finds a Hamilton cycle in polynomial time with small error probabilities?

For example, let us examine the classical binomial random graph $\cG(n,p)$: P\'osa~\cite{Posa} and Korshunov~\cite{Kor76,Kor77} proved that the hamiltonicity threshold is at $p=\Theta(\log n/n)$.
Their result was improved by Koml\'os and Szemer\'edi~\cite{KomSzem} who showed that the hamiltonicity threshold coincides with the threshold for minimum degree $2$, and Bollob\'as~\cite{Boll} demonstrated that this is even true for the hitting times of these two properties in the corresponding random graph process.
But these results do not allow one to actually find any Hamilton cycle in polynomial time.
The first polynomial time randomised algorithms for finding Hamilton cycles in $\cG(n,p)$ are due to Angluin and Valiant~\cite{AngVal} and Shamir~\cite{Shamir}.
Subsequently, Bollob\'as, Fenner and Frieze~\cite{BFF} developed a deterministic algorithm, whose success probability (for input sampled from $\cG(n,p)$) matches the probability of $\cG(n,p)$ being hamiltonian in the limit as $n\to\infty$.

Turning to hypergraphs, there exist various notions of Hamilton cycles: weak Hamilton cycle, Berge Hamilton cycle, $\ell$-overlapping Hamilton cycles (for $\ell\in[r-1]$).
In each situation, one seeks to cyclically order the vertex set such that:
\begin{itemize}
\item any two consecutive vertices lie in a hyperedge (a \emph{weak Hamilton cycle}),
\item any two consecutive vertices lie in some chosen hyperedge and no hyperedge is chosen twice (a \emph{Berge Hamilton cycle}),
\item the edges are consecutive segments so that two consecutive edges intersect in exactly $\ell$ vertices 
(an \emph{$\ell$-overlapping Hamilton cycle}).
\end{itemize}

The (binomial) random $r$-uniform hypergraph $\Gr[r](n,p)$ defined on the vertex set $[n]:=\{1,\dots, n\}$, includes each \emph{$r$-set} $x\in\binom{[n]}{r}$ as an \emph{(hyper-)edge} independently with probability $p=p(n)$.
The study of Hamilton cycles in random hypergraphs was initiated more recently by Frieze in~\cite{Frieze}, who considered so-called loose cycles in $3$-uniform hypergraphs (these are $1$-overlapping cycles in our terminology).
 Dudek and Frieze~\cite{DudFriLoose,DudFriTight} determined, for all $\ell$ and $r$, the threshold for the appearance of 
 an $\ell$-overlapping Hamilton cycle in a random $r$-uniform hypergraph (most thresholds being determined exactly, some only asymptotically).
 However, these results were highly nonconstructive, relying either on a 
 result of Johansson, Kahn and Vu~\cite{JKV} or the second moment method.

 The case of weak Hamilton cycles was studied by Poole in~\cite{poole2014weak}, while Berge Hamilton cycles in random hypergraphs were studied by Clemens, Ehrenm\"uller and Person in~\cite{CEP16}, the latter one being algorithmic.

In the case $\ell=r-1$ it is customary to refer to  an $\ell$-overlapping cycle as a \emph{tight} cycle.
Thus,  the tight $r$-uniform cycle on vertex set $[n]$, $n\ge r$, has edges $\{i + 1, . . . , i + r\}$ for all $i$, where we identify vertex $n+i$ with $i$.
A general result of Friedgut~\cite{Friedgut} readily shows that the threshold for the appearance of an $\ell$-overlapping cycle in $\Gr[r](n,p)$ is sharp; that is, there is some threshold function $p_0=p_0(n)$ such that for any constant $\eps>0$ the following holds.
If $p\le(1-\eps)p_0$ then $\Gr[r](n,p)$ a.a.s.\ does not contain the desired cycle, whereas if $p\ge(1+\eps)p_0$ then it a.a.s.\ does contain the desired cycle.
Dudek and Frieze~\cite{DudFriTight} proved that for $r\ge 4$ the function $p_0(n)=e/n$ is a threshold function for containment of a tight cycle, while for $r=3$ they showed that a.a.s.\ $\Gr[3](n,p)$  contains a tight Hamilton cycle for any $p=p(n)=\omega(1/n)$.
An easy first moment calculation shows that if $p=p(n)\le(1-\eps)e/n$ then a.a.s.\ $\Gr[r](n,p)$ does not contain a tight Hamilton cycle.

\subsection{Main result}
At the end of~\cite{DudFriTight}, Dudek and Frieze posed the question of finding algorithmically various $\ell$-overlapping Hamilton cycles at the respective thresholds.
In this paper we study tight Hamilton cycles and provide  a first deterministic polynomial time algorithm, which works for $p$ only slightly above the threshold.

\begin{theorem}\label{thm:main} 
  For each integer $r\ge 3$ there exists $C>0$ and a deterministic polynomial time algorithm  with runtime $O(n^r)$ which for any $p\ge C(\log n)^3n^{-1}$ a.a.s.\ finds
  a tight Hamilton cycle in the random $r$-uniform hypergraph
  $\cG^{(r)}(n,p)$.
\end{theorem}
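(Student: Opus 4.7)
The plan is to follow the absorbing method and derandomize via pseudo-random properties that $\Gr(n,p)$ satisfies a.a.s.\ when $p\ge C(\log n)^3/n$. The algorithm first verifies these properties on its input hypergraph; if any fails it outputs \emph{fail} (which occurs with probability $o(1)$), and otherwise runs a deterministic constructive procedure guaranteed to succeed. The properties required are a uniform degree condition stating that every $(r-1)$-tuple $\tpl{x}$ lies in $(1\pm o(1))np$ edges, a \emph{connecting lemma} producing many short tight paths between any two suitably disjoint $(r-1)$-tuples $\tpl{u},\tpl{v}$ while avoiding any prescribed small set of already-used vertices, and a mild boundedness condition on neighborhoods of small vertex sets.

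Given these properties, I would execute the construction in four stages. \emph{Absorbers.} For each vertex $v$ we locate $\Omega(\log n)$ vertex-disjoint \emph{absorber gadgets} $A_v$: short tight paths on $V\setminus\{v\}$ that admit an alternative tight path on $V(A_v)\cup\{v\}$ with the same endpoint $(r-1)$-tuples; the degree condition provides polynomially many candidates per vertex, so a greedy allocation succeeds. \emph{Absorbing path.} The absorbers are chained into a single tight path $P_A$ of sublinear length by invoking the connecting lemma for each consecutive pair. \emph{Coverage.} We greedily extend $P_A$ to a tight path $P$ covering all but a small reservoir $R$; each extension step has $\Omega(\log^3 n)$ legal continuations by the degree condition, which suffices throughout the greedy process. \emph{Absorption and closing.} The vertices of $R$ are inserted into $P$ one by one via the embedded absorbers, and $P$ is then closed into a tight Hamilton cycle through a final use of the connecting lemma.

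The main obstacle is reconciling three constraints: the low density $p=\Theta((\log n)^3/n)$, the determinism, and the $O(n^r)$ runtime. The $(\log n)^3$ factor must support a union bound over $\Theta(n)$ choice operations (extensions, linkings, absorptions), each requiring that some neighborhood of size roughly $np$ deviates from its expectation by at most a constant fraction; the three logarithms come respectively from Chernoff estimates for the basic degree condition, the connecting lemma (whose branching depth is limited by the available density), and the simultaneous avoidance of used vertices during the linking and absorption phases. To meet the $O(n^r)$ runtime bound, which barely exceeds the cost of reading the input, one would preprocess the edges into a hash table supporting constant-time edge queries and $O(np)$-time enumeration of the extensions of any $(r-1)$-tuple, so that each stage becomes implementable in time $O(np)$ per operation over $O(n)$ operations. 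The most delicate ingredient is likely a deterministic constructive version of the connecting lemma at this density, producing a short tight path between any prescribed pair of endpoint $(r-1)$-tuples while avoiding a small forbidden set, using only the structural pseudo-random properties as input.
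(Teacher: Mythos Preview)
Your proposal captures the broad absorbing-method framework, but it departs from the paper's argument in a way that creates a genuine gap. You propose to first verify a fixed list of pseudo-random properties (degree conditions, a connecting property, boundedness) and then run a deterministic procedure that succeeds whenever those properties hold. The paper does \emph{not} do this. Instead it runs a deterministic algorithm that sequentially exposes edges of the input: at each step it queries all edges through some $(r-1)$-set, records that set in an \emph{exposure hypergraph} $\cE$, and the analysis applies Chernoff or Janson to the still-unexposed edges. Determinism comes from the fact that the sequence of queries is a fixed function of the input; the probabilistic analysis comes from treating not-yet-queried edges as fresh random bits.

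This distinction matters most at the connecting step. The paper's Connecting Lemma builds two fans with $Q=p^{-(r-1)/2}\log n$ leaves each and then applies Janson's inequality to the $r-1$ (still unexposed) edges joining some pair of leaves; the failure probability is $n^{-\Theta(1)}$ per application, and only polynomially many applications occur. To phrase this as a static property verifiable upfront you would need something like ``for every pair of $(r-1)$-tuples and every small forbidden set $F$, a short tight path from one to the other avoiding $F$ exists''. The number of such forbidden sets is exponential in $|F|$, so no union bound is available, and the Janson argument genuinely requires the relevant edges to be unconditioned. Your sketch flags the connecting lemma as ``the most delicate ingredient'' but supplies no mechanism; the paper's exposure-hypergraph bookkeeping (keeping $\cE$ sparse and disjoint from the sets where the next connection will be made) is exactly that mechanism, and without it the argument does not close at density $p=\Theta((\log n)^3/n)$.

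A second structural difference: you build absorbers for \emph{every} vertex and then greedily allocate, whereas the paper fixes a small reservoir $R$ of size $2Cp^{-1}\log n$ \emph{in advance} and builds one absorber per vertex of $R$ only. The greedy extension then falls back into $R$ whenever it cannot extend into fresh vertices, so the set of doubly-used vertices is automatically contained in $R$. This sidesteps the allocation problem entirely. Moreover the paper's absorbers are not constant-size gadgets located by search; each is itself assembled from a spike path plus several applications of the Connecting Lemma and has size $o(\log^2 n)$, which is what lets the construction work at this density and within the $O(n^r)$ time budget.
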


Prior to our work there were two algorithms known that dealt with finding tight cycles.
The first algorithmic proof 
was given by B\"ottcher, Kohayakawa and the first and the fourth authors in~\cite{TightCycle}, where they presented a 
randomised polynomial time algorithm which could find tight cycles a.a.s.\ at the edge probability $p\ge  n^{-1+\eps}$ for any fixed $\eps\in (0,1/6r)$ and running time $n^{20/\eps^2}$. 
 The second result is a randomised quasipolynomial time algorithm of Nenadov and \v{S}kori\'{c}~\cite{nenadov2016powers}, which works for $p\ge C(\log n)^8/n$.

Our result builds on the adaptation of the absorbing technique of
 R\"odl, Ruci\'nski and Szemer\'edi~\cite{RRSz06} to sparse random (hyper-)graphs.
 This technique was actually used earlier 
 by Krivelevich in~\cite{Kri97} in the context of random graphs.
 However, the first results that provided essentially optimal thresholds (for other problems) are proved in~\cite{TightCycle} mentioned above in the context of random hypergraphs and independently 
by K\"uhn and Osthus in~\cite{KOPosa}, who studied the threshold for the appearance of powers of Hamilton cycles 
in random graphs.
The probability of  $p\ge C(\log n)^3n^{-1}$ results in the use of so-called reservoir 
structures of polylogarithmic size, as first used by 
Montgomery to find spanning trees in random graphs~\cite{M14b}, and later in~\cite{nenadov2016powers}.
Our improvements result in the combination of the two algorithmic approaches~\cite{TightCycle,nenadov2016powers} and in the analysis of a simpler algorithm that we provide.


\smallskip

\paragraph{\bf Organisation.}
In Section~\ref{sec:outline} we provide an informal overview of our algorithm.
In Section~\ref{sec:proof} we then provide two key lemmas and the proof of Theorem~\ref{thm:main} which rests on these lemmas.
In the subsequent 
sections we prove these main lemmas: the Connecting Lemma and the Reservoir Lemma.

\section{An informal algorithm overview}\label{sec:outline}
\subsection{Notation and inequalities}
An $s$-\emph{tuple} $(u_1,\dots,u_s)$ of vertices is an ordered set of distinct
vertices.
We often denote tuples by bold symbols, and occasionally also
omit the brackets and write $\tpl{u}=u_1,\dots,u_s$.
Additionally, we may
also use a tuple as a set and write for example, if~$S$ is a set,
$S\cup\tpl{u}:=S\cup\{u_i\colon i\in[s]\}$.
The \emph{reverse} of the
$s$-tuple $\tpl{u}$ is the $s$-tuple $\rev{\tpl{s}}:=(u_s,\dots,u_1)$.

In an $r$-uniform hypergraph~$\cG$ the tuple $P=(u_1,\dots,u_\ell)$ forms a
\emph{tight path} if the set $\{u_{i+1},\dots,u_{i+r}\}$ is an edge for every $0\le
i\le \ell-r$.
For any $s\in[\ell]$ we say that~$P$ \emph{starts} with the
$s$-tuple $(u_1,\dots,u_s)=:\tpl{v}$ and \emph{ends} with the $s$-tuple
$(u_{\ell-(s-1)},\dots,u_\ell)=:\tpl{w}$.
We also call~$\tpl{v}$ the
\emph{start $s$-tuple} of $P$, $\tpl{w}$ the \emph{end $s$-tuple} of
$P$, and~$P$ a $\tpl{v}-\tpl{w}$ path.
The \emph{interior} of~$P$ is formed
by all its vertices but its start and end $(r-1)$-tuples.
Note that the interior
of $P$ is not empty if and only if $\ell>2(r-1)$.

For a binomially distributed random variable $X$ and a constant $0< \gamma <1$ we will apply the following Chernoff-type bound (see, e.g.,  {\cite[Corollary~2.3]{JaLuRu:Book}})
\begin{align}
\label{eq:Chernoff}
\PP \left[ |X - \EE(X) | \le \gamma \EE(X) \right] &\le 2 \exp\left( - \frac{\gamma^2 \EE(X)}{3} \right).
\end{align}

In addition we will make use of the following consequence of Janson's inequality (see
for example~\cite{JaLuRu:Book}, Theorem 2.18
): Let~$\Omega$ be a finite set
and~$\cP$ be a family of non-empty subsets of~$\Omega$.
Now consider the
random experiment where each $e\in\Omega$ is chosen independently with
probability~$p$ and define for each~$P\in\cP$ the indicator variable $I_P$
that each element of~$P$ gets chosen.
Set~$X=\sum_{P\in\cP} I_P$ and
$\Delta= \sum_{P\neq P',P\cap
	P'\neq\emptyset}\Exp(I_PI_{P'})$.
Then
\begin{align}
\label{eq:Janson}
	\PP[X=0]\le\exp \left(- \frac{\EE (X)^2}{\EE (X) + \Delta} \right).
\end{align}

\subsection{Overview of the algorithm}
We start with the given sample of the random hypergraph $\Gr[r](n,p)$ and we will reveal the edges as we proceed.
First, using the Reservoir Lemma (Lemma~\ref{lem:respath} below), we construct a tight path $\Pres$ which covers a small but bounded away from zero fraction of $[n]$, which has the \emph{reservoir property}, namely that there is a set $R\subset V(\Pres)$ of size $2 C p^{-1} \log n \le 2n/\log^2 n$ such that for any $R'\subset R$, there is a tight path covering exactly the vertices $V(\Pres)\setminus R'$ whose ends are the same as those of $\Pres$, and this tight path can be found given $\Pres$ and $R'$ in time polynomial in $n$ a.a.s.

 We now greedily extend $\Pres$, choosing new vertices when possible and otherwise vertices in $R$.
 We claim that a.a.s.\ this strategy produces a structure $\Palm$ which is almost a tight path extending $\Pres$ and covering $[n]$.
 The reason it is only `almost' a tight path is that some vertices in $R$ may be used twice.
 We denote the set of vertices used twice by $R_1'$.
 But we will succeed in covering $[n]$ with high probability.
 Recall that, due to the reservoir property, we can dispense with the vertices from $R'_1$ in the part $\Pres$ of the almost tight Hamilton path $\Palm$.
 
 Finally, we apply the Connecting Lemma (Lemma~\ref{lem:connect} below) to find a tight path in $R\setminus R_1'$ joining the ends of $\Palm$, and using the reservoir property this gives the desired tight Hamilton cycle.

This approach is similar to that in~\cite{TightCycle}.
The main difference is the way we prove the Reservoir Lemma (Lemma~\ref{lem:respath}).
In both~\cite{TightCycle} and this paper, we first construct many small, identical, vertex-disjoint \emph{reservoir structures} (in some part of the literature, mostly in the dense case, this structure is called an absorber).
A reservoir structure contains a spanning tight path, and a second tight path with the same ends which omits one \emph{reservoir vertex}.
We then use Lemma~\ref{lem:connect} to join the ends of all these reservoir structures together into the desired $\Pres$.
In~\cite{TightCycle}, reservoir structures are of constant size (depending on the $\eps$) and they are found by using brute-force search.
This is slow, and is also the cause of the algorithm in~\cite{TightCycle} being randomised: there it is necessary to simulate exposure in rounds of the random hypergraph since the brute-force search reveals all edges.
In this paper, by contrast, we construct reservoir structures by a local search procedure which is both much faster and reveals much less of the random hypergraph.

We will perform all the constructions in this paper by using local search procedures.
At each step we reveal all the edges of $\cG^{(r)}(n,p)$ which include a specified $(r-1)$-set, the \emph{search base}.
The number of such edges will always be in expectation of the order of $pn$, so that by Chernoff's inequality and the union bound, with high probability at every step in the algorithm the number of revealed edges is close to the expected number.
Of course, what we may not do is attempt to reveal a given edge twice: we therefore keep track of an \emph{exposure hypergraph} $\cE$, which is the $(r-1)$-uniform hypergraph consisting of all the $(r-1)$-sets which have been used as search bases up to a given time in the algorithm.
We will show that $\cE$ remains quite sparse, which means that at each step we have almost as much freedom as at the start when no edges are exposed.

For concreteness, we use a doubly-linked list of vertices as the data structure representing a tight (almost-) path.
However this choice of data structure is not critical to the paper and we will not further comment on it.
The reader can easily verify that the various operations we describe can be implemented in the claimed time using this data structure.
To simplify readability, we will omit in the calculations floor and ceiling signs whenever they are not crucial for the arguments.

\section{Two key Lemmas and the proof of Theorem~\ref{thm:main}}
\label{sec:proof}
\subsection{Two Key Lemmas}
Recall the definition of the \emph{reservoir path}  $\Pres$.
It is an $r$-uniform hypergraph with a special subset $R\subsetneq V(\Pres)$ and some start and end $(r-1)$-tuples $\tpl{v}$ and $\tpl{w}$ respectively, such that:
\begin{enumerate}
\item $\Pres$ contains a tight path with the vertex set $V(\Pres)$ and the `end tuples' $\tpl{v}$ and $\tpl{w}$, and
\item for \emph{any} $R'\subseteq R$, $\Pres$ contains a tight path with the vertex set $V(\Pres)\setminus R'$ and the `end tuples' $\tpl{v}$ and $\tpl{w}$.
\end{enumerate}

We first give the lemma which constructs $\Pres$.
In addition to with high probability returning $\Pres$, we also need to describe the likely resulting exposure hypergraph.

\begin{lemma}[Reservoir Lemma]\label{lem:respath}
	For each $r\ge 3$ and $p \in (0,1]$ there exists $C>0$ and a deterministic $O(n^r)$-time algorithm whose input is an $n$-vertex $r$-uniform hypergraph $G$ and whose output is either `Fail' or a reservoir path $\Pres$ with ends $\tpl{u}$ and $\tpl{v}$ and an $(r-1)$-uniform exposure hypergraph $\cE$ on vertex set $V(G)$ with the following properties.
	\begin{enumerate}[label=\rom]
		\item\label{resp:inS} All vertices of $\Pres$ and edges of $\cE$ are contained in a set $S$ of size at most $\tfrac{n}{4}$.
		\item\label{resp:sizeR} The reservoir $R\subset V(\Pres)$ has size $2 C p^{-1} \log n$.
		\item\label{resp:ends} There are no edges of $\cE$ contained in $R\cup\tpl{u}\cup\tpl{v}$.
		\item\label{resp:exp} All $r$-sets in $V(G)$ which have been exposed contain at least one edge of $\cE$.
	\end{enumerate}
	
	When $G$ is drawn from the distribution $\cG^{(r)}(n,p)$ and $p\ge Cn^{-1}\log^3n$, the algorithm returns `Fail' with probability at most $n^{-2}$.
\end{lemma}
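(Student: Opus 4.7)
Following the strategy sketched after Theorem~\ref{thm:main}, I build $m := 2Cp^{-1}\log n$ vertex-disjoint \emph{reservoir structures}, each of constant size $k=k(r)$ and each capable of ``absorbing'' one reservoir vertex, and concatenate them into a single tight path $\Pres$ using Lemma~\ref{lem:connect}. Concretely, a reservoir structure for a vertex $x$ is an explicit $r$-uniform hypergraph on $k$ vertices with two designated end $(r-1)$-tuples admitting both a tight path through all its vertices and a tight path with the same ends that omits $x$; such a gadget of bounded size is classical (cf.~\cite{RRSz06}). The vertex $x$ will be the reservoir vertex contributed by the gadget, and $R$ will be the union of these designated reservoir vertices, so automatically $|R|=m$, which gives~\ref{resp:sizeR}.

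Each gadget is embedded by a local search: having placed a prefix of the gadget, take the current trailing $(r-1)$-tuple as a search base, add it to the exposure hypergraph $\cE$, reveal all edges of $\cG^{(r)}(n,p)$ extending it, and select any extension that lies outside the already-used set and satisfies the next combinatorial constraint of the gadget. Whenever the already-used set has size at most $n/4$, the expected number of revealed edges at each step is at least $\tfrac{3}{4}pn = \Omega(\log^3 n)$ by the hypothesis $p\ge C\log^3 n/n$, and the Chernoff bound~\eqref{eq:Chernoff} gives concentration with failure probability $n^{-\omega(1)}$ per step. Since each gadget uses $O(1)$ search bases, it is embedded in time $O(n)$ with failure probability $n^{-\omega(1)}$, and a union bound over the $m$ gadget embeddings together with the $O(m)$ invocations of Lemma~\ref{lem:connect} required to stitch consecutive end tuples into $\Pres$ yields an overall failure probability well below $n^{-2}$.

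To control the set $S$, take it to be the union of the vertex sets of all the gadgets, all connecting paths produced by Lemma~\ref{lem:connect}, and the bounded-size neighbourhoods that the search bases touch. This has size $O(m) = O(n/\log^2 n) \le n/4$, which gives~\ref{resp:inS}. Property~\ref{resp:exp} is automatic from the design of the algorithm, since the only way any $r$-set is ever queried is by being exposed as an extension of an $(r-1)$-set previously added to $\cE$.

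\textbf{The main obstacle} is enforcing property~\ref{resp:ends}, that no $(r-1)$-edge of $\cE$ lies inside $R\cup\tpl{u}\cup\tpl{v}$. This is essential because the later application of Lemma~\ref{lem:connect} inside $R$ (which closes the Hamilton cycle in the proof of Theorem~\ref{thm:main}) must see a genuinely unexposed subhypergraph on $R$. I enforce~\ref{resp:ends} by arranging the gadget so that its designated reservoir vertices are never the last $(r-1)$ vertices of any prefix used as a search base, and by instructing Lemma~\ref{lem:connect} to forbid search bases contained in $R\cup\tpl{u}\cup\tpl{v}$. Checking that this restriction still leaves enough unexposed candidates at every step is the only real work: since $R$ and $\tpl{u}\cup\tpl{v}$ together have size $O(p^{-1}\log n) = o(n)$, the forbidden $(r-1)$-tuples lose us only a negligible fraction of the expected degree at each search base, so the same Chernoff and union-bound argument goes through. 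The assumption $p\ge C\log^3 n/n$ enters precisely here: it makes each single-step failure probability $\exp(-\Omega(\log^3 n)) = n^{-\omega(1)}$, which comfortably absorbs a polynomial-sized union bound and yields the required $n^{-2}$ bound.
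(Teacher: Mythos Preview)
Your plan has a real gap at the heart of the absorber construction. You propose to build each reservoir gadget as a constant-size hypergraph by local search, revealing edges at the current trailing $(r-1)$-tuple and picking any extension. That procedure builds tight paths; it does not build absorbers. An absorber for $x$ must contain two tight paths with the same ends, one spanning and one omitting $x$, and any such structure necessarily contains an edge (or a bounded-length tight path) joining two $(r-1)$-tuples that were \emph{both already fixed} earlier in the construction. At that step ``the next combinatorial constraint of the gadget'' is not ``pick any vertex in a linear-size pool'' but ``hit a prescribed $r$-set'' (or a constant-length chain of prescribed $r$-sets), and the probability this succeeds is $p^{O(1)}=n^{-1+o(1)}$, not $1-n^{-\omega(1)}$. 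Your Chernoff step simply does not apply at that point, and the union bound over $m$ gadgets blows up.

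This is precisely the difficulty the paper's proof is engineered to avoid, and it is why the paper's absorbers are \emph{not} of constant size. The paper fixes $R$ in advance, builds for each $a\in R$ a short tight segment through $a$ by genuine local search, and then supplies every ``closure'' step with the Spike Path Lemma (Lemma~\ref{lem:spike}) and the Connecting Lemma (Lemma~\ref{lem:connect}): each required connection between two already-fixed $(r-1)$-tuples is realised by a path of length $o(\log n)$, found by fanning out from both ends to $\sim p^{-(r-1)/2}\log n$ leaves and applying Janson's inequality. The resulting absorber has size $o(\log^2 n)$, not $O(1)$. A secondary issue is that you invoke Lemma~\ref{lem:connect} without checking its hypotheses: each call needs a set of size exactly $Cp^{-1}\log n$ with exposure density at most $c|S|^{r-1}$, which the paper arranges by partitioning $U_2$ and $U_3$ into blocks of that size and rotating among them, rather than working inside a single adaptively-grown $S$.
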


Furthermore we need a lemma which allows us to connect two given tuples with a not too long path.
This lemma is the engine behind the proof and behind the Reservoir Lemma.

\begin{lemma}[Connecting Lemma]\label{lem:connect}
For each $r\ge 3$ there exist $c,C>0$ and a deterministic $O(n^{r-1})$-time algorithm whose input is an $n$-vertex $r$-uniform hypergraph $G$, a pair of distinct $(r-1)$-tuples $\tpl{u}$ and $\tpl{v}$, a set $S\subset V(G)$ and an $(r-1)$-uniform exposure hypergraph $\cE$ on the same vertex set $V(G)$.
The output of the algorithm is either `Fail' or a tight path of length $o(\log n)$\footnote{We will make this more precise later. You could replace this by at most $C n / \log \log n$.} in $G$ whose ends are $\tpl{u}$ and $\tpl{v}$ and whose interior vertices are in $S$, and an exposure hypergraph $\cE'\supset\cE$.
We have that all the edges $E(\cE')\setminus E(\cE)$ are contained in $S\cup\tpl{u}\cup\tpl{v}$.

Suppose that $G$ is drawn from the distribution $\cG^{(r)}(n,p)$ with $p\ge C(\log n)^3/n$, that $\cE$ does not contain any edges intersecting both $S$ and $\tpl{u} \cup \tpl{v}$.
If furthermore $|S| = Cp^{-1}\log n$ and $|e(\cE[S])| \le c |S|^{r-1}$ then $e(\cE')\le e(\cE)+ O(|S|^{r-2})$ and the algorithm returns `Fail' with probability at most $n^{-5}$.
\end{lemma}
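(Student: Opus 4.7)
The plan is to construct the path by a bidirectional breadth-first search, growing one tree of tight paths from $\tpl u$ and another from $\tpl v$, always extending through vertices of $S$. I will keep sets $U_i,V_i$ of end $(r-1)$-tuples at depth $i$ with $U_0=\{\tpl u\}$ and $V_0=\{\tpl v\}$, recording for each tuple its unique \emph{ancestry} sequence of vertices. To expand a tuple $\tpl x\in U_i$: if its underlying $(r-1)$-set has not yet been placed in $\cE$, I add it to $\cE$, query every $r$-set through $\tpl x$, and for each $y\in S$ not in the ancestry such that the queried $r$-edge is present I insert the shifted tuple into $U_{i+1}$. I will run this to depth $d=\lceil(r-1)\log n/(2\log\log n)\rceil+O(1)$ on each side, then search for a pair $(\tpl x,\tpl y)\in U_d\times V_d$ with $\tpl x=\tpl y$ and ancestries vertex-disjoint on their interiors; if found, I concatenate the two tight paths into a $\tpl u$--$\tpl v$ path and output it, otherwise output Fail.

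The key conceptual observation is that every edge I query contains the $(r-1)$-set of the tuple being expanded, which is queried only once. By the two hypotheses that $\cE$ contains no edge meeting both $S$ and $\tpl u\cup\tpl v$ and that $|e(\cE[S])|\le c|S|^{r-1}$, initially at most a $c$-fraction of the $(r-1)$-subsets of $S$ are blocked; since only $O(|S|^{r-2})\ll|S|^{r-1}$ further blocks will be added throughout the run, the expected number of children in a single expansion is $(1-o(1))p|S|=(1-o(1))C\log n$, and these children are independent of the past. A Chernoff bound~\eqref{eq:Chernoff} combined with a union bound over the at most $O((C\log n)^d)=n^{O(1)}$ expansion events then gives, for $C$ sufficiently large, that with failure probability at most $n^{-6}$ every level grows by a factor in $[\tfrac12 C\log n,\,2C\log n]$; consequently $|U_d|,|V_d|\ge(\tfrac12 C\log n)^d\ge|S|^{(r-1)/2}\log^{10}n$ for my choice of $d$.

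To secure the collision I will apply Janson's inequality~\eqref{eq:Janson} to the family of events, indexed by triples (interior sequence of length $d-1$ in $S$ on the $\tpl u$-side, meeting $(r-1)$-tuple $\tpl z$, interior sequence of length $d-1$ on the $\tpl v$-side, with all $2d+r-3$ vertices distinct), that both the $\tpl u\to\tpl z$ and the $\tpl z\to\tpl v$ tight paths exist in $G$. Each such event depends on $2d+r-1$ independent edges, so $\EE X$ is of order $|U_d||V_d|/|S|^{r-1}\ge\log^{20}n$, while the quadratic term $\Delta$ is $O(\EE X)$ because pairs of configurations sharing $k\ge 1$ edges contribute a factor at most $|S|^{-\Omega(k)}$ relative to the diagonal. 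Janson's inequality then yields $\PP[X=0]\le\exp(-\tfrac12\log^{20}n)\ll n^{-5}$, and any configuration counted by $X$ is a collision the BFS must have witnessed.

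The aspects I expect to need the most care are: (a) formalising that each expansion is an independent Bernoulli experiment, handled by noting that distinct expansions touch pairwise-disjoint $r$-sets (two different $(r-1)$-search-bases have at most $r-2$ vertices in common and so never share an $r$-edge), so the randomness really is fresh; and (b) guaranteeing that the chosen collision has vertex-disjoint interior ancestries, for which a union bound using $d=o(|S|^{1/(r-1)})$ shows that the fraction of pairs with a coincident interior vertex is $o(1)$, leaving plenty of clean pairs among the $\log^{20} n$ collisions provided by Janson. The exposure bookkeeping is then immediate: only ancestry $(r-1)$-sets ever enter $\cE$, and all of these lie in $S\cup\tpl u\cup\tpl v$, while the total number added is $\sum_{i<d}(|U_i|+|V_i|)=O((C\log n)^{d-1})=O(|S|^{r-2})$, matching the claimed bound.
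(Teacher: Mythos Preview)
Your plan has two genuine gaps.

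First, the independence claim is wrong and the global sparsity of $\cE$ is not enough. The assertion that two $(r-1)$-search-bases ``never share an $r$-edge'' is false: if $|\tpl{x}_1\cap\tpl{x}_2|=r-2$ then $\tpl{x}_1\cup\tpl{x}_2$ is an $r$-set containing both. More importantly, for the Chernoff step you need that when expanding $\tpl{x}$, for most $y\in S$ no $(r-1)$-subset of $\tpl{x}\cup\{y\}$ lies in $\cE$; this amounts to bounding $\deg_\cE(\tpl{x}\setminus\{x_i\},S)$ for every $i$. Knowing only $e(\cE[S])\le c|S|^{r-1}$ does not give this: a single $(r-2)$-set can have $\cE$-degree close to $|S|$ and block essentially all extensions of any tuple containing it. The paper enforces precisely these local degree bounds via the ``good vertex'' conditions~\ref{bad:degree}--\ref{bad:dangerous} and Properties~\ref{claim:fan_extension}--\ref{claim:fan_dangerous} of Claim~\ref{claim:fan}, maintained inductively by filtering; your unfiltered BFS has no such control. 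Relatedly, without the multiplicity bound~\ref{claim:fan_collapsing}, children of distinct parents in $U_i$ can coincide (two parents agreeing in their last $r-2$ coordinates have the same potential children), so even if every parent has $\Theta(C\log n)$ children you cannot deduce $|U_{i+1}|\ge\tfrac12C\log n\cdot|U_i|$ as a set of distinct tuples.

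Second, your Janson step does not connect to the BFS. Applied a priori to all length-$2d$ tight $\tpl{u}$--$\tpl{v}$ paths through $S$, Janson shows such a path exists in $G$ with high probability; but your BFS skips every tuple whose $(r-1)$-set is already in $\cE$, and the initial $\cE$ alone may contain $c|S|^{r-1}$ of them. A path guaranteed by Janson may run through a skipped tuple and never be discovered. Once both trees are grown there is no fresh randomness left, so you cannot invoke Janson after the fact either. The paper sidesteps this entirely: it first builds both fans (filtering at every step to stay away from $\cE$) and only then applies Janson to the $r-1$ \emph{new} edges joining a leaf of $\cF_t(\tpl{u})$ to a leaf of $\cF_t(\tpl{v})$; the ``blocked'' and ``dangerous'' bookkeeping is exactly what certifies that enough of those final $r$-sets are genuinely unexposed.
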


\subsection{Overview continued: more details}

We now describe the algorithm claimed by Theorem~\ref{thm:main}, which we state in a high-level overview as Algorithm~\ref{alg:tightHamCycle} and explain somewhat informally some of the arguments.

  \begin{algorithm}[h]
   \caption{Find a tight Hamilton cycle in $\cG^{(r)}(n,p)$}
    \label{alg:tightHamCycle}
      \lnl{step:reservoir} use subroutine from Lemma~\ref{lem:respath} to
    either construct $\Pres$ (with ends $\tpl{u}$, $\tpl{v}$ and exposure hypergraph $\cE$ on $S$) or halt with \Failure \;
     $L:=V(G)\setminus S$\; 
     $U:=S\setminus V(\Pres)$\;
     \lnl{step:coverU} extend $\Pres$ greedily from $v$ to cover all vertices of $U$ and using up to $n/2$ vertices of $L$, otherwise halt with \Failure \;
     \lnl{step:coverL} extend $\Pres$ further greedily to $\Palm$ by covering all vertices of $L$ and using up to $|R|/2$ vertices of $R$, otherwise halt with \Failure \;
     \lnl{step:connect_ends} use subroutine of Lemma~\ref{lem:connect} to connect the ends of $\Palm$ using the unused at least $|R|/2$ vertices of $R$, otherwise halt with \Failure \;
 \end{algorithm}

\subsubsection*{Step~1} Given $G$ drawn from the distribution $\cG^{(r)}(n,p)$, we begin by applying Lemma~\ref{lem:respath} to a.a.s.\ find a reservoir path $\Pres$ with ends $\tpl{u}$ and $\tpl{v}$ contained in a set $S$ of size $\tfrac{n}{4}$.
Let $L=V(G)\setminus S$, and $U=S\setminus V(\Pres)$.
Recall that by Lemma~\ref{lem:respath}~\ref{resp:inS} and~\ref{resp:ends}, all edges of $\cE$ are contained in $S$; and $R\cup\tpl{u}\cup\tpl{v}$ contains no edges of $\cE$.
By~\ref{resp:exp} all exposed $r$-sets contain an edge of $\cE$; by choosing a little carefully where to expose edges (see Step~2 below), we will not need to worry about what exactly the edges of $\cE$ are beyond the above information.

\subsubsection*{Step~2}We extend $\Pres:=P_0$ greedily, one vertex at a time, from its end $\tpl{u}=\tpl{u}_0$, to cover all of $U$.
At each step $i$, we simply expose the edges of $G$ which contain the end $\tpl{u}_{i-1}$ of $P_{i-1}$ and whose other vertex is not in $V(P_{i-1})$, choose one of these edges $e$ and add the vertex from $e\setminus \tpl{u}_{i-1}$ to $P_{i-1}$ to form $P_i$.
The rule we use for choosing $e$ is the following: if $i$ is congruent to $1$ or $2$ modulo $3$, we choose $e$ such that $e\setminus \tpl{u}_{i-1}$ is in $L$, and if $i$ is congruent to $0$ modulo $3$ we choose $e$ such that $e\setminus \tpl{u}_{i-1}$ is in $U$ if it is possible; if not we choose $e$ such that $x_i:=e\setminus \tpl{u}_{i-1}$ is in $L$.
The point of this rule is that at each step we want to choose an edge which contains at least two vertices of $L$, because no such $r$-set can contain an edge of $\cE$ since all the edges of $\cE$ are contained in $S$ (Property~\ref{resp:inS}).
We will see that while $U\setminus V(P_{i-1})$ is large, we always succeed in choosing a vertex in $U$ when $i$ is congruent to 0 modulo $3$.
When it becomes small we do not, but a.a.s.\ we succeed often enough to cover all of $U$ while using not more than $\tfrac{5n}{8}$ vertices of $L$.

\subsubsection*{Step~3}Next, we continue the greedy extension, this time choosing a vertex in $L$ when possible and in $R$ when not, until we cover all of $L$.
It follows from the first two steps and Properties~\ref{resp:inS} and~\ref{resp:ends} that no edge of $\cE$ is in $L\cup R$.
Thus, at each step we choose from newly exposed edges and again we a.a.s.\ succeed in covering $L$ using only a few vertices of $R$.
Let the final almost-path (which uses some vertices $R_1' \subseteq R$ twice) be $\Palm$, and $R_1$ the subset of $R$ consisting of vertices we did not use in the greedy extension, i.e.\ $R_1=R\setminus R_1'$.

\subsubsection*{Step~4} At last, $\Palm$ covers $V(G) = L \cup U \cup V(\Pres)$.
Its ends, together with the vertices of $R_1$, satisfy the conditions of Lemma~\ref{lem:connect}, which we apply to a.a.s.\ complete $\Palm$ to an almost-tight cycle  $H'$ in which some vertices of $R_1$ are used twice.
The reservoir property of $R$ now gives a tight Hamilton cycle $H$.

\subsubsection*{Runtime} Our applications of Lemmas~\ref{lem:respath} and~\ref{lem:connect} take time polynomial in $n$ by the statements of those lemmas; the greedy extension procedure is trivially possible in $O(n^2)$ time (since at each extension step we just need to look at the neighbourhood of an $(r-1)$-tuple, and there are $O(n)$ steps).
Finally the construction of $\Pres$ allows us to obtain $H$ from $H'$ in time $O(n^2)$: we scan through $\Pres$, for each vertex $r$ of $R$ we scan the remainder of $H'$ to see if it appears a second time, and if so locally reorder $V(\Pres)$ to remove $r$ from $\Pres$.

To prove Theorem~\ref{thm:main}, what remains is to justify our claims that various procedures above a.a.s.\ succeed.

\subsection{Proof of Theorem~\ref{thm:main}}
We choose $C\ge \max \{ C_{L_{\ref{lem:respath}}}, C_{L_{\ref{lem:connect}}}, 10^8 \}$ large enough for Lemmas~\ref{lem:respath} and~\ref{lem:connect} to hold.
For this proof we do not need to know the value of $c'$ required for Lemma~\ref{lem:connect}.
We suppose that $n$ is large enough to make $\log \log n$ larger than any constant appearing in the following proof.
 
 \subsubsection*{Constructing $\Pres$}
 Let $G$ be drawn from the distribution $\cG^{(r)}(n,p)$.
 Lemma~\ref{lem:respath} states that with probability at least $1-n^{-2}$, a reservoir path $\Pres$ in $G$ is found in polynomial time.
 From this point on, at each step except the final connection, when we expose edges at an $(r-1)$-set $\tpl{x}$, that $(r-1)$-set will be included in the path we construct.
 Hence in future steps we will not examine edges containing $\tpl{x}$.
 Thus while we should keep updating $\cE$, in fact we will never need to know which edges are added after generating $\Pres$.
 
\subsubsection*{Extending $\Pres$ to cover all of $U$} 
We next aim to prove that with high probability the greedy extension of $\Pres$ to cover $U$ succeeds, with at least $n/8$ vertices of $L$ remaining uncovered at the end.
Recall that we chose $|S|=\tfrac{n}{4}$ and thus $|L|=\tfrac{3n}{4}$.
We choose the next vertex from $L$ when $i$ is congruent to $1$ or $2$ modulo $3$ or when we fail to extend into $U$.
At each step $i$ where at least $n/8$ vertices of $L$ are uncovered, we expose all the $r$-sets in $V(G)$ which contain the end $u_{i-1}$ of $P_{i-1}$ and a vertex of $L$.
The greedy algorithm can only fail to complete step $i$ if none of these $r$-sets turn out to be edges, which happens with probability at most $(1-p)^{n/8}\le \exp\big(-\tfrac{pn}{8}\big)<n^{-4}$ (since the edges of the random hypergraph are independent).
Taking the union bound, the greedy algorithm to cover $U$ fails before covering $\tfrac58n$ vertices of $L$ with probability at most $n^{-3}$.
 
 Similarly, for any $i$ such that $\big|U\setminus V(P_{i-1})\big|\ge C p^{-1} \log n$, if $i$ is divisible by $3$ the probability that no edge containing $u_{i-1}$ and a vertex of $U\setminus V(P_{i-1})$ is in $G$ is at most $\exp\big(-C \log n\big)<n^{-4}$.
 It follows that with probability at most $n^{-3}$ the greedy algorithm chooses a vertex of $L$ when $i$ is divisible by $3$ and $U\setminus V(P_{i-1})$ has size at least $C p^{-1} \log n$.
 Let $t_1$ be the first time in the greedy extension procedure when $U\setminus V(P_{t_1})$ has size less than $C p^{-1} \log n$.
 
 It remains to show that while the last $C p^{-1} \log n$ vertices of $U$ are covered, at most $n/8$ vertices of $L$ are used.
 We split these last $C p^{-1} \log n$ vertices into the last $\tfrac12 p^{-1}$ vertices and the rest.
 When $x$ vertices of $U$ remain uncovered with $x \ge \tfrac12p^{-1}$, then the probability of choosing a vertex of $U$ for the vertex $x_i$ extending $P_{i-1}$ (when $i$ is divisible by $3$) is at least $1-(1-p)^x\ge \tfrac13$.
 By Chernoff's inequality, the probability that at time $t_2:=t_1+6 C p^{-1} \log n$ there are more than $\tfrac12p^{-1}$ vertices of $U$ remaining uncovered is at most $\exp\big(-\tfrac{1}{6}C p^{-1} \log n\big)\le n^{-3}$.
 Next, we show that we cover all but at most $\log n$ vertices of $U$ in not too much more time.
 
 To see this, consider the following event.
 For $1\le j\le 7n/8$ and $\log n\le x\le \tfrac12p^{-1}$, let $A(x,j)$ be the event that we have $\big|U\setminus V(P_j)\big|=x$ and $\big|U\setminus V(P_{j-3000p^{-1}})\big|\le 2x$.
 We claim that the probability for any of these events to hold is at most $n^{-3}$.
 Indeed, if for some given $x$ and $j$ the event $A(x,j)$ occurs, then at each of the at least $500p^{-1}$ values of $i$ with $j-3000p^{-1}\le i\le j$, an edge containing $\tpl{u}_{i-1}$ and a vertex of $U$ appears with probability at least $1-(1-p)^x\ge px/2$ (since $x\le\tfrac12p^{-1}$).
 Thus for $A(x,j)$ to hold, it is necessary that a sum of at least $500p^{-1}$ Bernoulli random variables, each with probability at least $px/2$, is at most $x$.
 Chernoff's inequality states that this probability is at most $\exp\big(-\tfrac{250x}{12}\big)\le n^{-5}$, and taking the union bound over all $A(x,j)$ the claim follows.
 Taking in particular $x=2^{-k}n/\log n$ for $k\ge 1$ such that $2^{-k}n\log n\ge\log n$ (so $k\le\log n$) we see that with probability at least $1-n^{-3}$, at time $t_3:=t_2+3000p^{-1}\log n$ there are at most $\log n$ vertices of $U$ remaining uncovered.

While at least one vertex of $U$ remains uncovered, the probability that when $i$ is divisible by three we choose a vertex of $U$ is at least $p$.
Applying Chernoff's inequality, the probability that at time $t_4:=t_3+300p^{-1}\log n$ we still have not covered all of $U$ is at most $\exp(-\tfrac{100\log n}{12} ) \le n^{-3}$.
Putting all this together, the probability that $V(P_{t_4})$ does not cover $U$ is at most $4n^{-3}$.
Since $t_1\le 3|U|$, since $|U|\le|S|\le n/4$, and since $t_4-t_1\le n/16$, we conclude that with probability at least $1-4n^{-3}$ the greedy extension procedure indeed covers $U$ with at least $n/8$ vertices of $L$ left uncovered.
Let $t_5$ be the first time at which $P_{t_5}$ covers $U$.

\subsubsection*{Extending $\Pres$ further to $\Palm$ by covering all of $L$}
We now repeat a similar procedure to use up all of $L\setminus V(P_{t_5})$ while not using too many vertices in $R$.
Since no edges of $\cE$ are contained in $R\cup L$, at each time $t$, all the $r$-sets containing the end $\tpl{u}_{t-1}$ of $P_{t-1}$ and a vertex of $L\cup R\setminus V(P_{t-1})$ are unrevealed.
In particular, provided that at each step we have $\big|R\setminus V(P_{t-1})\big|\ge \tfrac12|R|$, by Chernoff's inequality with probability at least $1-n^{-4}$ at least one edge of $G$ is found consisting of $\tpl{u}_{t-1}$ and a vertex of $R\setminus V(P_{t-1})$.
Taking the union bound, the probability of the extension procedure failing when $\big|R\setminus V(P_{t-1})\big|\ge \tfrac12|R|$ is at most $n^{-3}$.

As long as $\big|L\setminus V(P_{t-1})\big|\ge \tfrac{C}{100} p^{-1} \log n$, by Chernoff's inequality with probability at most $\exp\big(-\tfrac{C}{300} \log n\big)\le n^{-4}$ there is no edge of $G$ containing $\tpl{u}_{t-1}$ and a vertex of $L\setminus V(P_{t-1})$; in particular with probability at least $1-n^{-3}$ the greedy extension covers all but at most $\tfrac{C}{100} p^{-1} \log n$ vertices of $L$ before using any vertex of $R$.
Let $t_6$ be the time at which all but at most $\tfrac{C}{100} p^{-1} \log n$ vertices of $L$ are covered.
Again, we now consider the time taken to cover all but $\tfrac12p^{-1}$ vertices of $L$.
At each time the probability of being able to choose a vertex of $L$ to extend our path with is at least $\tfrac13$, so that with probability at least $1-n^{3}$ we cover all but at most $\tfrac12p^{-1}$ vertices of $L$ by time $t_7\le t_6+\tfrac{C}{25} p^{-1} \log n$.
In particular we use at most $\tfrac{C}{25} p^{-1} \log n$ vertices of $R$ in this time.
  
  By the same analysis as before, the total time taken to go from covering all but at most $\tfrac12p^{-1}$ vertices of $L$ to covering all but at most $\log n$ vertices of $L$ and then all vertices of $L$ is with probability at least $1-2n^{-3}$ not more than $3000p^{-1}\log n+300p^{-1}\log n$.
  Putting this together, provided all these good events hold we succeed in covering all but at most $\log n$ vertices of $L$ having used at most
  \[\tfrac{C}{25} p^{-1} \log n+3300p^{-1}\log n < C p^{-1} \log n = \tfrac12|R| \]
  vertices of $R$.
  
  In sum, with probability at least $1-n^{-2}-8n^{-3}$, the algorithm succeeds in generating $\Palm$, where the set $R'\subset R$ of vertices not used in the greedy extension has size at least $\tfrac12|R|$.
  
\subsubsection*{Connecting the end tuples of $\Palm$ and getting the tight Hamilton cycle} 
  Applying Lemma~\ref{lem:connect} to connect the end tuples of $\Palm$ in a subset of $R'$ of size $Cp^{-1}\log n$ (which is possible since $R'$ together with the ends of $\Palm$ contains no edges of $\cE$ and since $|R'|\ge n/\log^2 n$), with probability at least $1-n^{-4}$ we find the desired almost-tight cycle $H'$, which gives us deterministically the desired tight Hamilton cycle $H$.
  Thus as desired the probability that our algorithm fails to find a tight Hamilton cycle is at most $n^{-1}$.
  \qed

\section{Proof of the Connecting Lemma}

In this section we prove Lemma~\ref{lem:connect} and a very similar lemma (Lemma~\ref{lem:spike}) dealing with `spike-paths' which we will require for Lemma~\ref{lem:respath}.
A spike-path is similar to a tight path, but after $(r-1)$-steps the direction of the last $(r-1)$-tuple is inverted.

\begin{definition}[Spike path]
	In an $r$-uniform hypergraph, a spike path of length $t$ consists of a sequence of $t$ pairwise disjoint $(r-1)$-tuples $\tpl{a}_1,\dots,\tpl{a}_t$, where $\tpl{a}_i=(a_{i,1},\dots,a_{i,r-1})$ for all $i$, with the property, that the edges $\{ a_{i,r-j},\dots,a_{i,1},a_{i+1,1},\dots,a_{i+1,j} \}$ are present for all $i=1,\dots,t-1$ and $j=1,\dots,r-1$.
	We call $\tpl{a}_i$ the $i$-th spike.
\end{definition}

This is the same as taking $t$ tight paths of length $2(r-1)$, where the end $(r-1)$-tuples of path $i$ are $\tpl{x}_i$ and $\tpl{y}_i$, and identifying $\rev{\tpl{x}_i}$ with $\tpl{y}_{i+1}$ for all $i=1,\dots,t-1$.
The proofs of Lemmas~\ref{lem:connect} and~\ref{lem:spike} are essentially identical, so we give the details of the former and then explain how to modify it to obtain the latter.

\subsection{Preliminaries}

For an $(r-1)$-tuple $\tpl{u}$ and an integer $i$ we define a \emph{fan} $\cF_i(\tpl{u})$ in an $r$-uniform hypergraph $\cH$ as a set $\{ P_1, \dots , P_s \}$ of tight paths in $\cH$, of length $i$ or $i+1$, starting in $\tpl{u}$.
For any set or tuple $\tpl{a}$, let $\{ P_j \}_{j \in I}$ be the subcollection of tight paths from $\cF_i(\tpl{u})$ in which $\tpl{a}$ appears as a consecutive interval (in arbitrary order).
The \emph{leaves} or \emph{ends} of $\cF_i(\tpl{u})$ are the ending $(r-1)$-tuples of alle the paths $P_1, \dots, P_s$.
We denote by $\mult(\tpl{a})$ the number of different
paths we see in $\{P_j\}_{j\in I}$ after truncating behind $\tpl{a}$.

In any $r$-uniform hypergraph $H=(V,E)$ the degree of a set or tuple $f$ of size $1 \le |f| \le r-1$ is the number of edges which it is contained in, i.e.
\begin{align*}
\deg_H(f) = |\{ e \in E : f \subseteq e \}|.
\end{align*}
Given a set $S \subseteq V$, we write $\deg_H (f,S)$ for the degree into $S$, that is, where we count only edges $e$ satisfying $e\setminus f\subset S$.

\subsection{Idea and further notation}

The basic idea is that, starting with the $\tpl{u}$ and $\tpl{v}$ and the empty fans $\cF_0(\tpl{u})$ and $\cF_0(\tpl{v})$, we want to fan out.
That is, for each path in $\cF_i(\tpl{u})$ we will find a large collection of ways to extend by one vertex and all the resulting paths form $\cF_{i+1}(\tpl{u})$.
We do this until we have fans $\cF_t(\tpl{u})$ and $\cF_{t}(\tpl{v})$ with 
\begin{align*}
	Q:=p^{-(r-1)/2} \log n
\end{align*}
leaves each.
This happens roughly when we have 
\begin{align*}
t:= 2 \cdot \left\lceil\frac{\log (Q)}{\log(\log n)}\right\rceil \le (r-1) \cdot \left\lceil\frac{\log (p^{-1})}{\log(\log n)}\right\rceil + 2 = o(\log n).
\end{align*}

A complication is that in this process we have to avoid the edges of $\cE$ when expanding the fans.
In order to make the modifications for the promised spike-path variation easy (cf.\ Lemma~\ref{lem:spike} below), we will do something a little more complicated.
We split into expansion and continuation phases, each of length $r-1$.
The first phase is an expansion phase, so when forming $\cF_1(\tpl{u})$, \dots, $\cF_{r-1}(\tpl{u})$ we find many ways to extend each path by one vertex and put all of them into the next fan.
The second phase is a continuation phase, so when forming $\cF_r(\tpl{u})$, \dots, $\cF_{2r-2}(\tpl{u})$ we choose only one way to extend each path.
As soon as we have a collection of paths with the desired $Q$ leaves, we cease expanding (even if we are still in an expansion phase) and simply continue each path such that each has the same length.
We construct fans from $\tpl{v}$ similarly, and we continue construction up to $\cF_{t}(\tpl{v})$.

In the final step we find $r-1$ further edges connecting two of the leaves, giving us a tight path connecting $\tpl{u}$ to $\tpl{v}$.
Again there is a complication here: some pairs of leaves $(\tpl{w},\tpl{x})$ may be \emph{blocked} by edges of $\cE$, meaning that inside some $r$ consecutive vertices of the concatenation $\tpl{w}\rev{\tpl{x}}$ there is an edge of $\cE$.
If a pair of leaves is blocked, then trying to reveal $(r-1)$ edges connecting the pair would mean revealing an edge of the random hypergraph twice (and if a pair is not blocked then doing so does not reveal any edge twice).
We need to take this into account in our analysis, and we need to construct $\cF_{t}(\tpl{v})$ carefully to avoid creating \emph{dangerous} leaves for which a large fraction of the pairs is blocked.

To make this precise, we use the following algorithm.

  \begin{algorithm}[h]
   \caption{Find a connecting path from $\tpl{u}$ to $\tpl{v}$}
    \label{alg:connect}
     split $S$ into equal parts $S_1,\dots,S_{4(r-1)},S'_1,\dots,S'_{4(r-1)}$\;
     $\cF_t(\tpl{u}):=\mathrm{BuildFan}(\tpl{u},S_1,\dots,S_{4(r-1)},\emptyset)$\;
     set $D:=\big\{\tpl{x}\in S^{r-1}:(\tpl{w},\tpl{x})\text{ is blocked for at least } \xi'Q \text{ leaves $\tpl{w}$ of $\cF_t(\tpl{u})$}\big\}$\;
     $\cF_{t}(\tpl{v}):=\mathrm{BuildFan}(\tpl{v},S'_1,\dots,S'_{4(r-1)},D)$\;
     find $r-1$ edges connecting a leaf of $\cF_t(\tpl{u})$ to the reverse of one of $\cF_{t}(\tpl{v})$\;
     \Return tight path $P$ connecting $\tpl{u}$ to $\tpl{v}$ \;
 \end{algorithm}
 
 The subroutine $\mathrm{BuildFan}$ takes as input a starting tuple, the sets in which to build a fan, and a \emph{danger hypergraph} $D$ which is important for the construction of the second fan: it is an $(r-1)$-uniform hypergraph which records the tuples in $S'_1,\dots,S'_{4(r-1)}$ which we cannot easily connect to the leaves of $\cF_t(\tpl{u})$.
 The algorithm ensures that no leaf of a fan will be a dangerous tuple.
 Though we only need this for the leaves of the final fan, it is convenient to maintain this property throughout.
 For convenience, we write $S_i$ for the set $S_{i\, \mathrm{mod}\, 4(r-1)}\in\{S_1,\dots,S_{4(r-1)}\}$ with $S_0=S_{4(r-1)}$; the point of these sets is that we choose the $i$th vertex of each path in $S_i$, which is helpful in the analysis.
 Finally, we need to ensure that we always choose `good' vertices which allow us to continue our construction and prove various probabilistic statements.
 To that end, we define a vertex $b$ to be \emph{good} with respect to an exposure hypergraph $\cE$, a set $\cF$ of paths with distinct ends, a danger hypergraph $D$ and a $(r-1)$-tuple $\tpl{a}$ if none of the following statements hold for any (possibly empty) tuple $\tpl{c}$ whose vertices are contained in those of $\tpl{a}$ (not necessarily in the same order).
\begin{enumerate}[label=\rom]
	\item
	$b$ appears somewhere on the unique path $P(\tpl{a})$ ending in $\tpl{a}$,
	\label{bad:path}
	\item  
	$|\tpl{c}|\le r-2$ and $\deg_\cE( \{\tpl{c}, b \},S) > \xi^{r-|\tpl{c}|-1} |S|^{r-|\tpl{c}|-2}$,
	\label{bad:degree}
	\item $\mult( \{ \tpl{c}, b \} )  > \xi^{r-|\tpl{c}|-1} Q \cdot |S|^{-|\tpl{c}|-1} \cdot \log^{|\tpl{c}|+1} n$, and
	\label{bad:mult}
	\item $|\tpl{c}|\le r-2$ and $\deg_D( \{\tpl{c}, b \},S) > (\xi'|S|)^{r-|\tpl{c}|-2}$.
	\label{bad:dangerous}
\end{enumerate}
 Normally $\cE$, $\cF$ and $D$ will be clear from the context and we will simply say good for $\tpl{a}$.
 We are finally ready to give the $\mathrm{BuildFan}$ subroutine.
 
 \begin{algorithm}[h]
  \caption{$\mathrm{BuildFan}(\tpl{s},T_1,\dots,T_{4(r-1)},D)$}
  \label{alg:build}
  $\cF_0:=\big\{\tpl{s}\big\}$\;
  \ForEach{$i=1,\dots,t$}{
   \uIf{$i\mod 2(r-1)\in\{1,\dots,r-1\}$}{
    $\mathrm{phase}=$`expand'\;
   }
   \Else{
    $\mathrm{phase}=$`continue'\;
   }
   $\mathrm{NumPaths}:=|\cF_{i-1}|$\;
   $\cF_i:=\cF_{i-1}$\;
   \ForEach{$P\in\cF_{i-1}$}{
    \lnl{line:a} let the $(r-1)$-tuple $\tpl{a}$ be the end of $P$ \;
    reveal the edges of $G$ containing $\tpl{a}$ and add $\tpl{a}$ to $\cE$ \;
    \lnl{line:b} let $T \subseteq T_i$ be the set of vertices $b$ which are good for $\tpl{a}$ and $\{ \tpl{a},b \}$ is an edge\; 
    \uIf{$\mathrm{phase}=$`expand'}{
     $\mathrm{Add}:=\min\big(\log n,Q+1-\mathrm{NumPaths}\big)$\;
     choose $\mathrm{Add}$ vertices $b_1,\dots,b_{\mathrm{Add}}\in T$\;
     $\cF_i:=\cF_i\cup\{(P,b_1),\dots,(P,b_{\mathrm{Add}})\}\setminus\{P\}$\;
     $\mathrm{NumPaths}:=\mathrm{NumPaths}+\mathrm{Add}-1$\;
    }
    \Else{
     choose a vertex $b\in T$\;
     $\cF_i:=\cF_i\cup\{(P,b)\}\setminus\{P\}$\;
    }
   }
  }
  \Return $\cF_t$ \;
 \end{algorithm}

\subsection{Proof}
We set
\begin{align}
\label{eq:constants}
\xi' = \tfrac{1}{100^r}\,, \quad \xi=(\xi')^r/(2r2^{20r})\,, \quad \delta=8^r \xi+\xi'\,,\quad  C=10^{8r}\,\quad\text{and} \quad c = 10^{-r} \xi^{r}\,.
\end{align}

The proof amounts to showing two things.
First, $\mathrm{BuildFan}$ is likely to succeed---that is, that it does not fail for lack of good vertices before returning a fan, that the returned fan does have size $Q$, and that it does not add too many tuples to $\cE$.
Second, the required extra $r-1$ edges which should connect the fans can be found.

\subsubsection*{Creating the fans}
We begin by showing that the subroutine $\mathrm{BuildFan}(\tpl{s},T_1,\dots,T_{4(r-1)},D)$ is likely to succeed, whether we choose $\tpl{s}=\tpl{u}$, $T_i=S_i$ and $D=\emptyset$ or we choose $\tpl{s}=\tpl{v}$, $T_i=S'_i$ and $D$ as given in Algorithm~\ref{alg:connect}, using the following claim.

We define $L_i$ to be the leaves of $\cF_i$.

\begin{claim}
	\label{claim:fan}
	If step $i$ was successful, then step $i+1$ is successful with probability at least $1-n^{-3r}$ and the following holds throughout step $i+1$ for each $\tpl{a} \in L_{i+1}$ and each non-empty $\tpl{c}$ whose vertices are chosen from $\tpl{a}$, not necessarily in the same order.
	\begin{enumerate}[label = \bfseries P\arabic{enumi}]
		\item Each path in $\cF_i$ extends to at least one path in $\cF_{i+1}$; if $2(r-1)\ell<i\le 2(r-1)\ell+r-1$ and $|\cF_{i+1}|<Q$ then each path in $\cF_i$ extends to at least $\log n$ paths in $\cF_{i+1}$.
		In both cases, all leaves are not in $\cE$.
		\label{claim:fan_expansion}
		\item $e(\cE[S]) \le c |S|^{r-1} + 20r Q$.
		\label{claim:fan_edges}
		\item If $|\tpl{c}|<r-1$ we have $\deg_\cE (\tpl{c},S) \le \xi^{r-|\tpl{c}|} |S|^{r-1-|\tpl{c}|} +1$.
		\label{claim:fan_extension}
		\item We have $\mult(\tpl{c}) \le \xi^{r-|\tpl{c}|} Q \cdot |S|^{-|\tpl{c}|} \cdot \log^{|\tpl{c}|} n + 1$.
		\label{claim:fan_collapsing}
		\item If $1\le|\tpl{c}|\le r-2$ we have $\deg_D( \tpl{c} ,S) \le (\xi'|S|)^{r-|\tpl{c}|-1}$.
	  \label{claim:fan_dangerous}
	\end{enumerate}
\end{claim}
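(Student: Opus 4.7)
The plan is to prove the claim by induction on $i$. The base case $i=0$ holds trivially: $\cF_0=\{\tpl{s}\}$ gives nothing to verify in~\ref{claim:fan_expansion}, while~\ref{claim:fan_edges} reduces to the hypothesis $e(\cE[S])\le c|S|^{r-1}$ of Lemma~\ref{lem:connect}, and~\ref{claim:fan_extension}--\ref{claim:fan_dangerous} hold vacuously because $\tpl{s}\in\{\tpl{u},\tpl{v}\}$ lies outside $S$ and the initial $\cE$ has no edges meeting both $S$ and $\tpl{u}\cup\tpl{v}$. For the inductive step the task splits into two parts: a deterministic count showing that most vertices of $T_{i+1}$ are good for the end $(r-1)$-tuple $\tpl{a}$ of any path $P\in\cF_i$, followed by a Chernoff-based step producing enough good neighbours of $\tpl{a}$, and a direct accounting verifying that~\ref{claim:fan_edges}--\ref{claim:fan_dangerous} are preserved.

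For the first part, fix such a path $P\in\cF_i$. Condition~\itm{i} of goodness excludes at most $i=o(\log n)$ vertices. For each ordered subtuple $\tpl{c}\subseteq\tpl{a}$ with $|\tpl{c}|\le r-2$, I would apply a Markov average of $\deg_\cE(\{\tpl{c},b\},S)$ over $b\in S$; the identity $\sum_{b\in S}\deg_\cE(\{\tpl{c},b\},S)=(r-1-|\tpl{c}|)\deg_\cE(\tpl{c},S)$ combined with the inductive bound~\ref{claim:fan_extension} yields $O(r\xi|S|)$ vertices violating~\itm{ii}, and the analogous computations using~\ref{claim:fan_collapsing} and~\ref{claim:fan_dangerous} handle~\itm{iii} and~\itm{iv}. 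Summing over the constantly many tuples $\tpl{c}$, the total count of bad vertices is at most $(8^r\xi+\xi')|S|=\delta|S|$, which by the choices in~\eqref{eq:constants} is strictly less than $|T_{i+1}|/2=|S|/(8(r-1))$. Because every $(r-1)$-tuple previously added to $\cE$ was a path end whose successor was required to be good (hence the end itself was not in $\cE$ when its successor was chosen), no edge $\{\tpl{a},b\}$ has been revealed so far; thus the number of good neighbours of $\tpl{a}$ in $T_{i+1}$ is binomially distributed with expectation at least $p|S|/(8(r-1))=\Omega(C\log n)$, and Chernoff~\eqref{eq:Chernoff} produces at least $\log n$ such neighbours (more than the expansion budget $Q$ in any single step) with probability at least $1-n^{-3r}$. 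A union bound over the $O(Qt)$ path extensions in $\mathrm{BuildFan}$ then delivers~\ref{claim:fan_expansion}.

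For the remaining properties, step $i+1$ adds only the $|\cF_i|\le Q+\log n$ path ends to $\cE$; since $|\cF_j|$ grows by at most a factor of $\log n$ per expansion step before plateauing, the geometric sum $\sum_{j\le t}|\cF_j|$ is dominated by its last term and bounded by $20rQ$, giving~\ref{claim:fan_edges}. For~\ref{claim:fan_extension} and~\ref{claim:fan_collapsing}, the new end $\tpl{a}'$ of an extended path is $\tpl{a}$ with its first vertex replaced by the freshly chosen good vertex $b$; for $\tpl{c}\subseteq\tpl{a}'$ containing $b$, goodness~\itm{ii} or~\itm{iii} directly yields the bound without the $+1$, while for $\tpl{c}\subseteq\tpl{a}'\setminus\{b\}\subseteq\tpl{a}$ the quantity $\deg_\cE(\tpl{c},S)$ (respectively $\mult(\tpl{c})$) has grown by at most one, absorbed by the $+1$ in the statements. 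Property~\ref{claim:fan_dangerous} follows from the same case split using~\itm{iv}, with no slack needed since $D$ is static. I expect the main obstacle to be aligning the constants in~\eqref{eq:constants} tightly enough that the bad-vertex estimate is strictly below $|T_{i+1}|/2$ while the $+1$ slack in~\ref{claim:fan_extension} and~\ref{claim:fan_collapsing} does not blow up under iteration; this is precisely why the hierarchy $\xi\ll\xi'\ll1/(r\,2^r)$ is forced, and everything else is a fairly standard Markov-and-Chernoff argument in the absorbing-structure paradigm.
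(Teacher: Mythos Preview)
Your overall inductive scheme matches the paper's, and your handling of conditions~\itm{i}--\itm{iii} and of the preservation of \ref{claim:fan_extension}--\ref{claim:fan_dangerous} is correct and in fact more explicit than the paper's own terse ``this is immediate''. The geometric-sum bound for \ref{claim:fan_edges} and the Chernoff step for \ref{claim:fan_expansion} are also fine.

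There is, however, a genuine gap in your treatment of condition~\itm{iv} when $\tpl{c}$ is the empty tuple. Your Markov argument requires, for each $\tpl{c}$, an upper bound on $\deg_D(\tpl{c},S)$ coming from the inductive hypothesis \ref{claim:fan_dangerous}. But \ref{claim:fan_dangerous} only applies for $1\le|\tpl{c}|\le r-2$; it gives no control on $e(D)$ itself, which is what you would need to bound the number of vertices $b$ with $\deg_D(\{b\},S)>(\xi'|S|)^{r-2}$. Unlike the cases $|\tpl{c}|=0$ for~\itm{ii} and~\itm{iii}, where \ref{claim:fan_edges} and the trivial bound $|\cF_i|\le Q$ respectively save you, there is no analogous hypothesis bounding $e(D)$.

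The paper fills this gap with a separate, non-Markov argument: if $\xi'|S|$ vertices $b$ satisfied~\itm{iv} with $\tpl{c}=\emptyset$, then $e(D)\ge(\xi'|S|)^{r-1}$, hence the total number of blocked pairs is at least $(\xi')^rQ|S|^{r-1}$, so by averaging some leaf $\tpl{a}$ of $\cF_t(\tpl{u})$ lies in at least $2r\cdot2^{2r}\xi|S|^{r-1}$ blocked pairs. One then upper-bounds the number of tuples $\tpl{b}$ with $(\tpl{a},\tpl{b})$ blocked by a greedy vertex-by-vertex count, using \ref{claim:fan_extension} for $\tpl{a}$ at each stage, obtaining at most $r\cdot2^{2r}\xi|S|^{r-1}$ such tuples --- a contradiction. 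This is precisely where the hierarchy $\xi\ll(\xi')^r$ in~\eqref{eq:constants} is used, and it is the one place in the proof that goes beyond ``Markov-and-Chernoff''. You need to add this argument.
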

\begin{claimproof}[Proof of Claim~\ref{claim:fan}]
  Observe that $\cF_0$ trivially satisfies the conditions of Claim~\ref{claim:fan}, modulo Chernoff's inequality for~\ref{claim:fan_expansion}.
  Suppose that for some $0\le i<t$, at each step $0\le j\le i$ of Algorithm~\ref{alg:build} the conditions of Claim~\ref{claim:fan} are satisfied.
  In particular, by~\ref{claim:fan_collapsing}, the ends of the paths $\cF_i$ are distinct as for $|\tpl{c}|=r-1$ we have $\mult({\tpl{c}}) <2$, and by~\ref{claim:fan_expansion} we have $|\cF_i|\ge\min\big(\log^{i/2} n,Q\big)$.
  
  To begin with, we show that $\cE$ cannot have too many edges.
  At each step $j$ with $1\le j\le i$, we add $|\cF_{j-1}|$ edges to $\cE$, so that we want to upper bound $\sum_{j=1}^t|\cF_{j-1}|$.
  Definitely $\cF_t$ has size at most $Q$ and $\cF_{j-4(r-1)}$ always has size less than half of $\cF_j$, so that this sum is dominated by $4r\sum_{i=1}^\ell 2^i$ where $\ell=\log_2Q$.
  We conclude that $\sum_{j=1}^t|\cF_{j-1}|\le 8rQ$.
  Since we create two fans, in total we obtain the claimed bound~\ref{claim:fan_edges}.
  
 \medskip 
  
  We now show that, for each choice of $P\in\cF_i$ with end $\tpl{a}$, the total number of vertices in $T_{i+1}$ which are not good for $\tpl{a}$ is at most $\delta|S|$.
  This will allow us to prove~\ref{claim:fan_expansion}.
  First, since $P$ has at most $t$ vertices, at most $t$ vertices are excluded by~\ref{bad:path}.
  
	For each $\tpl{c}$ of size at most $r-2$ with vertices chosen from $\tpl{a}$, there are at most $2r \xi |S|$ vertices fulfilling~\ref{bad:degree}.
	To see this for $|\tpl{c}|=0$, observe that otherwise we have $e(\cE[S]) > 2 \xi^{r} |S|^{r-1} > 2 c|S|^{r-1}$, contradicting~\ref{claim:fan_edges} as $Q \le \tfrac1C |S|^{r-1}$.
	Assume that it fails for some non-empty $\tpl{c}$.
	Then there are more than $2r\xi |S|$ vertices $x \in T_{i+1}$ with 
	\begin{align*}
	\deg_\cE( \{\tpl{c}, x \},S) > \xi^{r-|\tpl{c}|-1} |S|^{r-|\tpl{c}|-2}
	\end{align*}
	which implies that 
	\begin{align*}
	\deg_\cE( \tpl{c},S) > 2 \xi^{r-|\tpl{c}|} |S|^{r-|\tpl{c}|-1}
	\end{align*}
	in contradiction to~\ref{claim:fan_extension}.
	
	Furthermore there are at most $2r\xi |S|$ vertices $b$ fulfilling~\ref{bad:mult} for each $\tpl{c}$.
	Again for $|\tpl{c}|=0$ it is enough to note that there are at most $Q$ paths in total and thus there are at most
	\begin{align*}
	\frac{Q}{\xi^{r-1}Q \cdot |S|^{-1} \cdot \log n} \le \xi |S|
	\end{align*}
	vertices $b$ with $\mult(b)  > \xi^{r-1}Q \cdot |S|^{-1} \cdot \log n$.
	Now suppose $\tpl{c}$ is not empty.
	Every path in $\cF_{i+1}$ whose end contains $\{ \tpl{c},b \}$ was constructed by the expansion of some path in $\cF_i$ whose end contains $\tpl{c}$.
	Note that every path expands at most by a factor of $\log n$ and by~\ref{claim:fan_extension} there are at most $\xi^{r-|\tpl{c}|}Q \cdot |S|^{-|\tpl{c}|}  \log^{|\tpl{c}|}n+1$ paths in $\cF_i$ whose end contains $\tpl{c}$.
	If this bound is less than two, then there are at most $\log n$ vertices $b$ with $\mult( \{ \tpl{c},b \} ) \ge 1$.
	Otherwise there are at most
	\begin{align*}
	\frac{2 \xi^{r-|\tpl{c}|} Q \cdot |S|^{-|\tpl{c}|} \log^{|\tpl{c}|+1}n}{\xi^{r-|\tpl{c}|-1} Q \cdot |S|^{-|\tpl{c}|-1} \log^{|\tpl{c}|+1}n}= 2 \xi |S|
	\end{align*}
	vertices $x \in S_i$ with $\mult( \{ \tpl{c},b \} )  > \xi^{r-|\tpl{c}|-1} Q \cdot |S|^{-|\tpl{c}|-1} \cdot \log^{|\tpl{c}|+1} n$.
	
  Finally, we want to show that for each $\tpl{c}$ there are at most $\xi'|S|$ vertices $b$ in $T_i$ which satisfy~\ref{bad:dangerous}.
  This is trivial for $D=\emptyset$, so we may assume that $D$ is as given in Algorithm~\ref{alg:connect}.
  
First suppose $|\tpl{c}|=0$.
If a vertex $b$ satisfies~\ref{bad:dangerous}, then it is in $(\xi'|S|)^{r-2}$ edges of $D$, so if there are $\xi'|S|$ such vertices then there are at least $(\xi'|S|)^{r-1}$ edges in $D$ using vertices of $T_i$ (note that edges of $D$ only intersect $T_i$ in one vertex).
In other words, the number of blocked pairs $(\tpl{a},\tpl{b})$ with $\tpl{a} \in \cF_t(\tpl{u})$ and $\tpl{b} \in S^{r-1}$ is at least
\[(\xi'|S|)^{r-1}\cdot \xi'Q \ge 2r\cdot 2^{2r}\xi |S|^{(r-1)} \cdot Q\]
using our choice of parameters \eqref{eq:constants}.
We conclude that there is a leaf $\tpl{a}$ of $\cF_t(\tpl{u})$ that is in at least $2r\cdot 2^{2r}\xi|S|^{r-1}$ blocked pairs with tuples $\tpl{b}\in S^{r-1}$.
Fix this leaf.
Now~\ref{claim:fan_extension} holds for $\tpl{a}$, and we will show that this gives a contradiction.
Consider the following property of tuples $\tpl{b}$.
For any sets $A$ and $B$ with vertices in $\tpl{a}$ and $\tpl{b}$ respectively, if $|A|+|B|=r-1$ then $A\cup B$ is not in $\cE$, while if $|A|+|B| <  r-1$ then we have $\deg_\cE(A\cup B,S) \le 2\xi^{r-|A|-|B|} |S|^{r-1-|A|-|B|}$.
Trivially if $\tpl{b}$ has the property, then $(\tpl{a},\tpl{b})$ is not blocked.
If $\tpl{b}$ does not have the property, then let $B_{\tpl{b}}$ be a set of minimal size witnessing the property's failure.
Since $A\not\in\cE$ by~\ref{claim:fan_expansion}, and by~\ref{claim:fan_extension}, we do not have $|B_{\tpl{b}}|=0$.

We now count the ways to create $\tpl{b}$ which does not have the property.
We choose vertices $b_1,\dots,b_{r-1}$ one at a time until we create a witness $B\neq\emptyset$ that $\tpl{b}$ cannot have the property.
When we come to choose $b_j$, we have at most $|S|$ ways to choose it without creating a witness.
If we are to choose $b_j$ which witnesses the property's failure, then there are sets $A$ and $B'$ contained respectively in $\tpl{a}$ and $\{b_1,\dots,b_{j-1}\}$ such that $(A,B'\cup\{b_j\})$ fails the property.
There are at most $2^{2r}$ choices for $A$ and $B'$.
Since $(A,B')$ does not witness the property failing, by definition for each choice of $A$ and $B'$ there are at most $\xi|S|$ choices of $b_j$.
Summing up, there are at most $r\cdot 2^{2r}\xi|S|^{r-1}$ tuples $\tpl{b}$ which do not have the property.
As all blocked pairs use a tuple from this set, this is the desired contradiction.

Now suppose $\tpl{c}$ is a tuple for which there are at least $\xi'|S|$ vertices $b$ satisfying~\ref{bad:dangerous}.
In other words, there are more than $\xi'|S|$ vertices $b\in T_{i+1}$ with $\deg_D( \{\tpl{c}, b \},S) > (\xi'|S|)^{r-|\tpl{c}|-2}$, which implies that
\begin{align*}
\deg_D( \tpl{c},S) > (\xi'|S|)^{r-|\tpl{c}|-1}
\end{align*}
in contradiction to~\ref{claim:fan_dangerous}.

Putting all this together we conclude that there are at most $\delta|S|$ vertices $b$ such that $\tpl{c}$ exists satisfying any one of the conditions~\ref{bad:path}--\ref{bad:dangerous}, as desired.
	
	 \medskip 
	 
	Now let $\tpl{a}$ be a leaf of $\cF_i$.
	We now reveal all $r$-sets containing $\tpl{a}$ which were not revealed before and which use a vertex $x$ of $T_{i+1}$ which is good for $\tpl{a}$.
	Let $X$ be the number of edges $\{ \tpl{a},x \}$ which appear.
	Then the expected value of $X$ is at least $p (1-\delta) |T_{i+1}| \ge \tfrac{C}{20r} \log n$.
	Applying the Chernoff bound \eqref{eq:Chernoff} we get that $X < \tfrac{C}{40r} \log n$ with probability at most $2 \exp( - C \log n / (240r)) \le n^{-4r}$.
	Let us suppose that $X\ge\log n$.
	Then Algorithm~\ref{alg:build} does not fail to create the required number of paths from $\tpl{a}$.
	Taking a union bound over the at most $|S|^{r-1} t$ such events, we obtain the stated success probability of Claim~\ref{claim:fan}.
	
	It remains to prove that~\ref{claim:fan_extension},~\ref{claim:fan_collapsing} and~\ref{claim:fan_dangerous} also hold in $\cF_{i+1}(\tpl{u})$.
	But this is immediate, since we avoided choosing vertices which could cause their failure.
\end{claimproof}

Taking a union bound over the $2t$ steps, we conclude that with probability at most $n^{-2r}$ there is a failure to construct either of the desired fans $\cF_t(\tpl{u})$ and $\cF_{t}(\tpl{v})$.

\subsubsection*{Connecting the fans}
By construction, as set up in line~\ref{line:b} of Algorithm~\ref{alg:build}, all leaves of $\cF_{t}(\tpl{v})$ are not edges of $D$ and thus not dangerous.
Let $L$ be the leaves from $\cF_t(\tpl{u})$ and $L'$ the leaves from $\cF_{t}(\tpl{v})$ reversed.
We now want to reveal more edges to connect a leaf from $L$ with one from $L'$.

For $\tpl{a} \in L$ and $\tpl{b} \in L'$ let $P$ be the tight path with $r-1$ edges on the vertices $(\tpl{a},\tpl{b})$.
There are $|L'| \cdot (1 -\xi') |L| = (1 -\xi')Q^2$ many such paths $P$, which are not blocked, because $\tpl{b}$ is not dangerous.
Let $\cP$ be the set of all these paths which are not blocked.

Let $I_P$ be the indicator random variable for the event that the path $P$ appears, which occurs with probability $p^{r-1}$.
Further let $X$ be the random variable counting the number of paths which we obtain and note $X=\sum_{P \in \cP} I_P$.
With Janson's inequality \eqref{eq:Janson} we want to bound the probability that $X=0$.
First let us estimate the expected value of $X$.
By the observation from above we have $\EE (X) = |\cP| p^{r-1} \ge (1 -\xi')(cC)^{r-1} \log^{r-1}n \ge \log n$.

Now consider two distinct paths $P=(\tpl{a},\tpl{b})$ and $P'=(\tpl{a}',\tpl{b}')$, which share at least one edge.
It follows from property~\ref{claim:fan_collapsing} of Claim~\ref{claim:fan} and the quantities $Q$ and $|S|$, that two paths are identical if they share at least $r/2$ vertices in their end tuple.
Since either the start or end $r/2$-tuple of one of the $(r-1)$-tuples from $P$ has to agree with $P'$, we can assume without loss of generality that $\tpl{a}=\tpl{a}'$.
Further we can assume that for some $1 \le j < r/2$, $\tpl{b}$ and $\tpl{b}'$ agree on the first $j$ entries, but not in the $(j+1)$-st.
They can not share another $r/2$ or more entries as this would imply $\tpl{b}=\tpl{b'}$.
Thus $P$ and $P'$ share precisely an interval of length $r-1+j$ and thus $j$ edges.
With this we can bound $\EE (I_P I_{P'}) \le p^{2r-2-j}$.

Let $N_{P,j}$ be the number of paths $P'$ such that $P$ and $P'$ share precisely $j$ edges.
The above shows that for fixed $P=(\tpl{a},\tpl{b})$, $N_{P,j}$ is at most the number of choices of leaves $\tpl{b}' \in L'$ such that $\tpl{b}$ and $\tpl{b}'$ only differ in the ending $(r-1-j)$-tuple, plus the number of choices of leaves $\tpl{a}' \in L$ such that $\tpl{a}$ and $\tpl{a}'$ only differ in the start $(r-1-j)$-tuple.
It follows from property~\ref{claim:fan_collapsing} of Claim~\ref{claim:fan}, that the start $j$-tuple of $\tpl{b}'$ and the end $j$-tuple of $\tpl{a}'$ are the ends of at most $\xi^{r-j}Q\cdot |S|^{-j} \log^j n +1$ many paths.
This implies that $N_{P,j} \le Q\cdot |S|^{-j} \log^j n$, because $j< r/2$.

We can now obtain for $P,P'\in\cP$
\begin{align*}
\Delta=\sum_{P \not=  P',P\cap P'\not=\emptyset}\Exp(I_PI_{P'})
=\sum_{P \in \cP}\sum_{1\le j<r/2}\Big(\sum_{|P'\cap P|=j} \Exp(I_PI_{P'})\Big).
\end{align*}
With the above we get
\begin{equation*}
\begin{split}
\Delta&\le\sum_{P \in \cP}\sum_{1\le j<r/2} N_{P,j} \cdot p^{2r-2-j}\\
&\le |\cP|^2 p^{2r-2} \sum_{1\le j<r/2}|\cP|^{-1}  \cdot Q\cdot |S|^{-j} \log^j n \cdot p^{-j} \\
& \le \EE (X)^2  \cdot 2 \, Q^{-1}  \sum_{1\le j<r/2} C^{-j} \le \EE (X)^2 3\, C^{-1} \log^{-1}n,
\end{split}
\end{equation*}
where we used that $|S| \ge C p^{-1} \log n$ and $Q \ge \log n$.
Hence, Janson's inequality~\eqref{eq:Janson} implies that $\Prob(X=0)\le \exp(-\EE (X)^2/(\EE (X) + \Delta))\le \exp(-\tfrac{C}{6} \log n)$.
Thus we find some connection with probability at least $1-n^{-2r}$.

But we do not want to reveal all the $O(Q^2)$ edges for all paths from $\cP$, since this would add way to manu edges to the exposure hypergraph $\cE$.
The above argument proves that it is very likely that the desired connecting path exists and we will argue how to find such a path in an ``economic'' way.
We find it by the following procedure.
First we reveal all the edges at each leaf in $L$ and $L'$.
This entails adding $2 Q$ edges to $\cE$ and if $r=3$ then we are already done and we have added $2Q \le |S|$ edges to $\cE$.

For $r \ge 4$ we then construct from each leaf of $L$ all possible tight paths in $S$ with $\lfloor (r-2)/2 \rfloor$ edges and similarly from each leaf of $L'$ all tight paths of length $\lfloor (r-3)/2 \rfloor$.
We do this by the obvious breadth-first-search procedure, revealing at each step all edges at the end of each currently constructed path with less than $\lfloor (r-2)/2 \rfloor$ (or $\lfloor (r-3)/2 \rfloor$ respectively) edges which have not so far been revealed and adding each end to $\cE$.
Trivially, if the desired path exists then two of these constructed paths will link up, so that this procedure succeeds in finding a connecting path with probability $1-n^{-2r}$.

The expected number of edges in $S$ containing any given $(r-1)$-set in $S$ is $p(|S|-r+1)$, is between $\tfrac{C}{2}\log n$ and $C\log n$.
Thus by Chernoff's inequality and the union bound, with probability at least $1-n^{-3r}$ no such $(r-1)$-set is in more than $2C\log n$ edges contained in $S$.
It follows that the number of edges we add to $\cE$ in this procedure is with probability at least $1-n^{-3r}$ not more than
\begin{equation*}
\begin{split}
2 Q\sum_{i=0}^{\lfloor (r-2)/2 \rfloor }(2C\log n)^i &\le 2 p^{-(r-1)/2} \log n \cdot r (2C \log n)^{(r-2)/2} \\
&= O\left(  p^{-(r-2)} \log^{r-2} n  \right) = O(|S|^{r-2})\,,
\end{split}
\end{equation*}
for $r \ge 4$.
Putting this together with property~\ref{claim:fan_edges} of Claim~\ref{claim:fan} we see that the final exposure graph $\cE'$ has at most $O(|S|^{r-2})$ edges more than $\cE$, as desired.

\subsubsection*{Probability and runtime}
Altogether we have that our algorithm for the Connecting Lemma fails with probability at most $n^{-2r}+n^{-2r}+n^{-3r } \le n^{-5}$.

We now estimate the running time of our algorithm.
In total we added $O(|S|^{r-2})$ many $(r-1)$-tuples to $\cE$.
For every $(r-1)$-tuple exposed, we have to go through at most $n$ vertices until we found all new edges.
This gives at most $O(n^{r-1})$ steps.
We can easily keep track of the bounds for Claim~\ref{claim:fan} and update them after each event.
Since there is nothing else to take care of, we have a total number of at most $O(n^{r-1})$ steps.

\subsection{Spike path version}

The statement of the lemma is almost the same as for the tight path version, Lemma~\ref{lem:connect}.

\begin{lemma}[Spike path Lemma]\label{lem:spike}
For each $r\ge 3$ there exist $c,C>0$ and a deterministic $O(n^{r-1})$-time algorithm whose input is an $n$-vertex $r$-uniform hypergraph $G$, a pair of distinct $(r-1)$-tuples $\tpl{u}$ and $\tpl{v}$, a set $S\subset V(G)$ and a $(r-1)$-uniform exposure hypergraph $\cE$ on the same vertex set.
The output of the algorithm is either `Fail' or a spike path of even length $o(\log n)$ in $G$ whose ends are $\tpl{u}$ and $\tpl{v}$ and whose interior vertices are in $S$, and an exposure hypergraph $\cE'\supset\cE$.
We have $e(\cE')\le e(\cE)+ O(|S|^{r-2})$ and all the edges $E(\cE')\setminus E(\cE)$ are contained in $S\cup\tpl{u}\cup\tpl{v}$.

Suppose that $G$ is drawn from the distribution $\cG^{(r)}(n,p)$ with $p\ge C(\log n)^3/n$, that $\cE$ does not contain any edges intersecting both $S$ and $\tpl{u} \cup \tpl{v}$.
If furthermore we have $|S| = Cp^{-1}\log n$ and $|e(\cE[S])| \le c |S|^{r-1}$ then  the algorithm returns `Fail' with probability at most $n^{-5}$.
\end{lemma}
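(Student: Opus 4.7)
The proof proceeds by the same template as Lemma~\ref{lem:connect}: run a modified Algorithm~\ref{alg:connect} which grows fans of partial paths from $\tpl{u}$ and $\tpl{v}$ via $\mathrm{BuildFan}$, then bridges a leaf of each. As was already highlighted in the discussion preceding Algorithm~\ref{alg:build}, the phase structure of $\mathrm{BuildFan}$ — alternating expansion and continuation phases of length $r-1$ — was deliberately chosen precisely so that this spike-path variation would be immediate: the $r-1$ vertices of one spike can be chosen in an expansion phase (with branching), and the $r-1$ bridge edges to the next spike are traversed in the following continuation phase without branching.

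I would redefine fans $\cF_i(\tpl{u})$ as collections of partial spike paths, all starting from $\tpl{u}$ and having the same number of vertices added so far. Extending by a new vertex $b$ still means exposing edges at a specified $(r-1)$-tuple $\tpl{a}$ and requiring $\{\tpl{a},b\}$ to be an edge, but $\tpl{a}$ is no longer always the last $r-1$ vertices of the partial path. Instead, if we are $j$ vertices into building the bridge between spikes $\tpl{a}_i$ and $\tpl{a}_{i+1}$, the spike-path definition forces $\tpl{a}=(a_{i,r-j},\dots,a_{i,1},a_{i+1,1},\dots,a_{i+1,j})$. This tuple is still uniquely determined by the partial path, so the definition of a good vertex and the statement of Claim~\ref{claim:fan} (which only refer to degrees, multiplicities and non-membership in $\cE$ of such end tuples, not to the internal structure of the paths they end) carry over verbatim and are proved in exactly the same way.

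The even-length stipulation guarantees that after $t$ steps every partial spike path ends with a completed spike, so to connect a leaf $\tpl{a}$ of $\cF_t(\tpl{u})$ with a leaf $\tpl{b}$ of $\cF_t(\tpl{v})$ I would simply place $\rev{\tpl{b}}$ as the spike following $\tpl{a}$; the required $r-1$ bridge edges live on the same $2(r-1)$-vertex configuration with the same intersection structure treated by Janson's inequality in the tight-path proof, so both the probabilistic argument that some valid connection exists and the breadth-first search bounding the added exposure by $O(|S|^{r-2})$ transfer with no modification. The main subtlety — and the one step I would verify in detail — is that distinct partial spike paths in a given fan still produce distinct end tuples, so that the multiplicity bound in Claim~\ref{claim:fan}\ref{claim:fan_collapsing} holds and no $(r-1)$-tuple is exposed twice. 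This is immediate because the entire sequence of added vertices determines, and is determined by, the partial spike path, but it is worth writing out to confirm that the bookkeeping for $\cE$, the failure probability, and the $O(n^{r-1})$ runtime transfer without change.
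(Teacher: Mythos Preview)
Your proposal is correct and takes essentially the same approach as the paper's sketch proof: replace tight-path fans by spike-path fans, adjust the end tuple $\tpl{a}$ in Algorithm~\ref{alg:build} (line~\ref{line:a}) to the unique one whose extension continues the spike path, observe that Claim~\ref{claim:fan} and the Janson connection argument transfer verbatim, and note that equal fan depths force even length. The only minor discrepancy is the orientation in the bridging step: the spike-bridge from leaf $\tpl{a}$ to the next spike $\tpl{b}$ is the tight path on $(\rev{\tpl{a}},\tpl{b})$, so the paper reverses the leaves of $\cF_t(\tpl{u})$ rather than placing $\rev{\tpl{b}}$ after $\tpl{a}$ as you suggest --- an easily fixed detail that does not affect the argument.
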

\begin{proof}[Sketch proof]
 We modify the proof of Lemma~\ref{lem:connect} in the following simple ways.
 First, we will maintain fans of spike paths rather than tight paths, and we change Algorithm~\ref{alg:build} line~\ref{line:a} so that the tuple $\tpl{a}$ to be extended is the (unique) one whose extension continues to give us a spike path.
 Note that whenever we have a spike path ending in $\tpl{a}$ and we extend the spike path by adding one vertex $b$ then the end of the new spike path is an $(r-1)$-set whose vertices are contained in $(\tpl{a},b)$ (though in general not the last $r-1$ vertices nor in the same order).
 This is all we need to make our analysis of the fan construction work; it is not necessary to change anything in this part of the proof or the constants.
 Second, when we come to connect fans, we let $L$ be the reverses of the end tuples of $\cF_t(\tpl{u})$ and $L'$ be the end tuples of $\cF_{t}(\tpl{v})$, and (again) look for a tight path connecting a tuple in $L$ to one in $L'$.
 This has no effect on the proof that a connecting path from some member of $L$ to some member of $L'$ exists, and the result is the desired spike path.
 The resulting spike path is of even length as both fans have the same size.
\end{proof}

\section{Proof of the Reservoir Lemma}

\subsection{Idea}

The reservoir path $\Pres$ will consist of absorbing structures (each ``carrying'' one vertex from $R$).
More precisely, these absorbing structures can be seen as small reservoir path with reservoir of cardinality $1$.
Each of these small absorbers consists of a cyclic spike path plus the reservoir vertex, where pairs of spikes are additionally connected with tight paths (cf./ Figure~\ref{figure:Absorber1}).

First we choose the reservoir set $R$ and disjoint sets $U_1$, $U_2$ and $U_3$.
For every vertex in $R$ we will reveal the necessary path segment in $U_1$.
From the endpoints of these path we fan out and also close the backbone structure of the reservoir inside $U_2$.
Finally we use $U_3$ and Lemma~\ref{lem:connect} to get the missing connections in the reservoir structures and connect all structures to one path $\Pres$.
In each step the relevant edges of the exposure graph $\cE$ are solely coming from the same step.

\subsection{Proof}
We arbitrarily fix the reservoir set $R$ of size $2 C p^{-1} \log n$ and disjoint sets $U_1$, $U_2$ and $U_3$ of the same size such that $S=R \cup U_1 \cup U_2 \cup U_3$ is of size $\tfrac{n}{4}$.
First we want to build the absorbing structures for every $a \in R$, which have size roughly $t^2 = o(\log^2 n)$.
There is a sketch of this structure for some $a \in R$ in Figure~\ref{figure:Absorber1}.

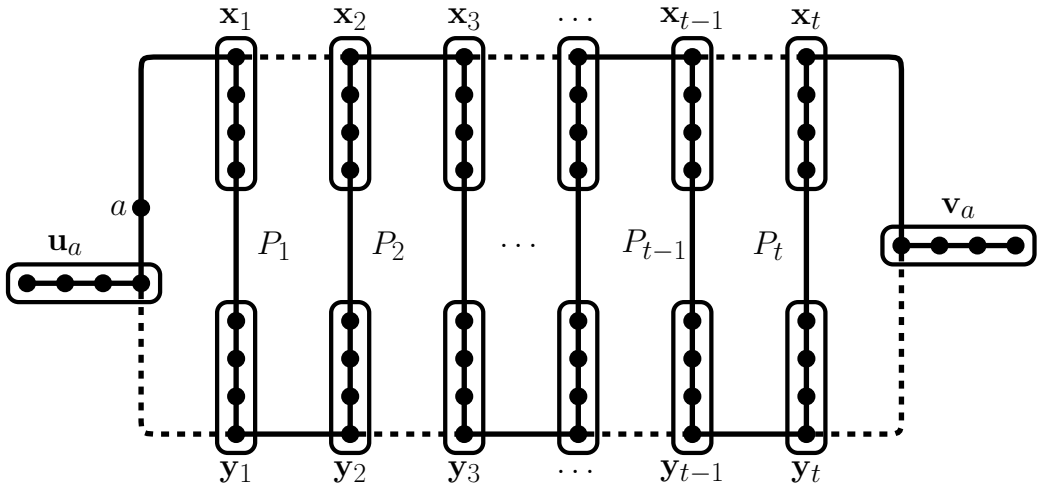
\begin{figure}[htb]
	\begin{center}
		\begin{tikzpicture}[ultra thick,scale=0.5, every node/.style={scale=0.5}]
		
		\tikzset{%
			mycolor/.style={%
				dashed}
		}
		
		
		\node[circle,fill,draw,label=left:{\Huge \bf $a$}] (a) at (-10,1){};		
		\draw[rounded corners] (-9.5,-1.5) rectangle (-13.5,-0.5);	
		\draw[rounded corners] (9.5,0.5) rectangle (13.5,-0.5);	
		
		\foreach \a in {0,1,2,3}
		{
			\node[circle,fill,draw] (u\a) at (-10-\a,-1){};		
			\node[circle,fill,draw] (v\a) at (10+\a,0){};		
		}		
		
		\foreach \a in {0,...,5}
		{
			\draw[rounded corners] ({-8+\a*3},5.5) rectangle ({-7+\a*3},1.5);
			\draw[rounded corners] ({-8+\a*3},-5.5) rectangle ({-7+\a*3},-1.5);
			
			\foreach \b in {0,1,2,3}		
			{
				\node[circle,fill,draw] (x\a\b) at (-7.5+\a*3,5-\b){};
				\node[circle,fill,draw] (y\a\b) at (-7.5+\a*3,-5+\b){};
			}
		}
		
		\node at (-12,0){\Huge \bf $\tpl{u}_a$};
		\node at (11.5,1){\Huge \bf $\tpl{v}_a$};
		\node at (-6.5,0){\Huge \bf $P_1$};
		\node at (-3.5,0){\Huge \bf $P_2$};
		\node at (3.5,0){\Huge \bf $P_{t-1}$};
		\node at (6.5,0){\Huge \bf $P_t$};
		\node at (0,0){\Huge \bf $\dots$};	
		
		\node at (-7.5,6){\Huge \bf $\tpl{x}_1$};			
		\node at (-4.5,6){\Huge \bf $\tpl{x}_2$};		
		\node at (-1.5,6){\Huge \bf $\tpl{x}_3$};	
		\node at (1.5,6){\Huge \bf $\dots$};	
		\node at (4.5,6){\Huge \bf $\tpl{x}_{t-1}$};
		\node at (7.5,6){\Huge \bf $\tpl{x}_t$};
		
		\node at (7.5,-6){\Huge \bf $\tpl{y}_t$};
		\node at (4.5,-6){\Huge \bf $\tpl{y}_{t-1}$};
		\node at (1.5,-6){\Huge \bf $\dots$};	
		\node at (-1.5,-6){\Huge \bf $\tpl{y}_3$};
		\node at (-4.5,-6){\Huge \bf $\tpl{y}_2$};
		\node at (-7.5,-6){\Huge \bf $\tpl{y}_1$};

	\draw[line width=2pt] (u3)--(u2)--(u1)--(u0);
	\draw[line width=2pt] (v3)--(v2)--(v1)--(v0);
	
	\foreach \a in {0,...,5}
	{
		\draw[line width=2pt] (x\a0)--(x\a1)--(x\a2)--(x\a3)--(y\a3)--(y\a2)--(y\a1)--(y\a0);
	}
		
	\draw[mycolor,line width=2pt] (x00)--(x10) (x20)--(x30) (x40)--(x50);
	\draw[mycolor,line width=2pt] (y10)--(y20) (y30)-- (y40);

	\draw[line width=2pt] (y00)--(y10) (y20)--(y30) (y40)--(y50);
	\draw[line width=2pt] (x10)--(x20) (x30)-- (x40);
	
	\draw[mycolor,line width=2pt] (u0)--(-10,-4.5) .. controls (-10,-5) ..(-9.5,-5)--(y00);
	\draw[mycolor,line width=2pt] (v0)--(10,-4.5) .. controls (10,-5) .. (9.5,-5)--(y50);		

	\draw[line width=2pt] (u0) -- (a);
	\draw[line width=2pt] (a)--(-10,4.5) .. controls (-10,5) ..(-9.5,5)--(x00);
	\draw[line width=2pt] (v0)--(10,4.5) .. controls (10,5) .. (9.5,5)--(x50);

		\end{tikzpicture}
	\end{center}
	\caption{Illustration of the absorber for one vertex $a \in R$ and $r=5$ with the path, which contains the vertex $a$.}
	\label{figure:Absorber1}
\end{figure}

So we fix $a \in R$.
We want to construct the following tight path on $2r-1$ vertices containing $a$ in the middle.
The end tuples are $\tpl{x}_1 = (x_1,\dots,x_{r-1})$ and $\tpl{u}_a=(u_1,\dots,u_{r-1})$ and together with $a$ we require that all the edges $\{ x_{r-j},\dots,x_1,a,u_1,\dots,u_{j-1}  \}$ are present for $j=1,\dots,\lfloor r$.
We build this path by first choosing $x_1,\dots,x_{r-2}$ arbitrarily from $U_1$.
Then we expose all edges containing $\{ x_1,\dots,x_{r-2},a \}$ to get $x_{r-1}$.
We continue by exposing all edges containing the set $\{ x_{r-j-1},\dots,x_1,a,u_1,\dots,u_{j-1} \}$ to get $u_j$ for $j=1,\dots, \lfloor r-1$.
The probability that in any of these cases we fail to find a new vertex inside a subset of $U_1$ of size at least $|U_1|/2$ is at most $n^{-5}$ by Chernoff's inequality.
A union bound over all $r$ edges and over all $a \in R$ reveals that with probability at most $n^{-3}$ we fail to construct the small starting graph for any $a$.

Recall that when adding edges, we always expose all edges containing one $(r-1)$-tuple and then add this to $\cE$.
All exposed $(r-1)$-tuples from this step are contained in $U_1 \cup R$ and none of them contains more than one vertex from $R$.
Furthermore we did at most $O(|R|\cdot |U_1|) = O(n^2)$ many steps so far.

Now we want to build the absorbing structure for $a$.
We partition each of $U_2$ and $U_3$ into parts of size $Cp^{-1}\log n$ (plus perhaps a smaller left-over set).
We apply Lemma~\ref{lem:spike} to the $(r-1)$-tuples $\rev{\tpl{x}_1}$ and $\rev{\tpl{u}_a}$ and connect them with a spike path of even length $2t+2$ in some part of $U_2$, with $t = o(\log n)$.
At each step we use a part of $U_2$ in which we have so far built the least spike paths for the application of Lemma~\ref{lem:spike}, which is necessary to control the edges of $\cE$ within this set.
We use $U_2$ as both tuples are contained in $U_1$ and thus we have no problem with edges from $\cE$ intersecting both $U_2$ and the end tuples.
Let the spikes after $\tpl{x}_1$ and $\tpl{u}_a$ be called $\tpl{x}_2,\dots,\tpl{x}_t$ and $\tpl{y}_1,\dots,\tpl{y}_t$ respectively.
The last remaining spike opposite of $\tpl{u}_a$ we call $\tpl{v}_a$.
We apply the tight-path version of Lemma~\ref{lem:connect} to find paths $P_i$ connecting the tuples $\tpl{x}_i$ and $\tpl{y}_i$ for $i=1,\dots,t$ in a part of $U_3$.
Again, we choose a part of $U_3$ which was used for building the least connecting paths so far.
We use parts of $U_3$ for these connections, because all the spikes are contained in $U_1 \cup U_2$ and thus there are no edges of $\cE$ intersecting $U_3$ and the spikes.
This finishes the absorbing structure for $a$.
It has end tuples $\tpl{u}_a$ and $\tpl{v}_a$.

To finish $\Pres$ we enumerate the vertices in $R$ increasingly $a_1,\dots, a_{|R|}$.
Then we use Lemma~\ref{lem:connect} repeatedly, again at each step using a part of $U_3$ which has been used least often previously, to connect the tuples $\tpl{v}_{a_i}$ to $\tpl{u}_{a_{i+1}}$ for $i=1,\dots,|R|-1$ with tight paths.
Thus we have obtained the path $\Pres$ with end tuples $\tpl{u}=\tpl{u}_{a_1}$ and $\tpl{v}=\tpl{v}_{a_{|R|}}$.

The absorbing works in the following way for the structure of a single vertex $a \in R$.
It relies on the fact, that the paths $P_i$ can be traversed in both directions and that we can walk from any spike to its neighbouring spike using a tight path.
The path which uses $a$ (Figure~\ref{figure:Absorber1}) starts with $\tpl{u}_a$, goes through $a$ to $\tpl{x}_1$ and then uses the path $P_1$ to $\tpl{y}_1$.
From there it goes via a tight path to $\tpl{y}_2$ and uses $P_2$ to go back to $\tpl{x}_2$.
Going from $\tpl{x}_i$ via path $P_i$ to $\tpl{y}_i$ and back from $\tpl{y}_{i+1}$ through $P_{i+1}$ to $\tpl{x}_{i+1}$ for $i=2,\dots,t-1$ the path ends up in $\tpl{v}_a$ and uses all vertices.
To avoid $a$ (Figure~\ref{figure:Absorber2}) the path starting in $\tpl{u}_a$ goes immediately to $\tpl{y}_1$, then uses the path $P_1$ to go to $\tpl{x_1}$.
Alternating as above and traversing all the paths $P_i$ in opposite direction we again end up in $\tpl{v}_a$ and used all vertices but $a$.

\begin{figure}[htb]
	\begin{center}
\begin{tikzpicture}[ultra thick,scale=0.5,every node/.style={scale=0.5}]

\tikzset{%
    mycolor/.style={%
        dashed}
}

		
		\node[circle,fill,draw,label=left:{\Huge \bf $a$}] (a) at (-10,1){};		
		\draw[rounded corners] (-9.5,-1.5) rectangle (-13.5,-0.5);	
		\draw[rounded corners] (9.5,0.5) rectangle (13.5,-0.5);	
		
		\foreach \a in {0,1,2,3}
		{
			\node[circle,fill,draw] (u\a) at (-10-\a,-1){};		
			\node[circle,fill,draw] (v\a) at (10+\a,0){};		
		}		
		
		\foreach \a in {0,...,5}
		{
			\draw[rounded corners] ({-8+\a*3},5.5) rectangle ({-7+\a*3},1.5);
			\draw[rounded corners] ({-8+\a*3},-5.5) rectangle ({-7+\a*3},-1.5);
			
			\foreach \b in {0,1,2,3}		
			{
				\node[circle,fill,draw] (x\a\b) at (-7.5+\a*3,5-\b){};
				\node[circle,fill,draw] (y\a\b) at (-7.5+\a*3,-5+\b){};
			}
		}
		
		\node at (-12,0){\Huge \bf $\tpl{u}_a$};
		\node at (11.5,1){\Huge \bf $\tpl{v}_a$};
		\node at (-6.5,0){\Huge \bf $P_1$};
		\node at (-3.5,0){\Huge \bf $P_2$};
		\node at (3.5,0){\Huge \bf $P_{t-1}$};
		\node at (6.5,0){\Huge \bf $P_t$};
		\node at (0,0){\Huge \bf $\dots$};	
		
		\node at (-7.5,6){\Huge \bf $\tpl{x}_1$};			
		\node at (-4.5,6){\Huge \bf $\tpl{x}_2$};		
		\node at (-1.5,6){\Huge \bf $\tpl{x}_3$};	
		\node at (1.5,6){\Huge \bf $\dots$};	
		\node at (4.5,6){\Huge \bf $\tpl{x}_{t-1}$};
		\node at (7.5,6){\Huge \bf $\tpl{x}_t$};
		
		\node at (7.5,-6){\Huge \bf $\tpl{y}_t$};
		\node at (4.5,-6){\Huge \bf $\tpl{y}_{t-1}$};
		\node at (1.5,-6){\Huge \bf $\dots$};	
		\node at (-1.5,-6){\Huge \bf $\tpl{y}_3$};
		\node at (-4.5,-6){\Huge \bf $\tpl{y}_2$};
		\node at (-7.5,-6){\Huge \bf $\tpl{y}_1$};

	\draw[line width=2pt] (u3)--(u2)--(u1)--(u0);
	\draw[line width=2pt] (v3)--(v2)--(v1)--(v0);
	
	\foreach \a in {0,...,5}
	{
		\draw[line width=2pt] (x\a0)--(x\a1)--(x\a2)--(x\a3)--(y\a3)--(y\a2)--(y\a1)--(y\a0);
	}
		
	\draw[line width=2pt] (x00)--(x10) (x20)--(x30) (x40)--(x50);
	\draw[line width=2pt] (y10)--(y20) (y30)-- (y40);

	\draw[mycolor,line width=2pt] (y00)--(y10) (y20)--(y30) (y40)--(y50);
	\draw[mycolor,line width=2pt] (x10)--(x20) (x30)-- (x40);
	
	\draw[line width=2pt] (u0)--(-10,-4.5) .. controls (-10,-5) ..(-9.5,-5)--(y00);
	\draw[line width=2pt] (v0)--(10,-4.5) .. controls (10,-5) .. (9.5,-5)--(y50);		

	\draw[mycolor,line width=2pt] (u0) -- (a);
	\draw[mycolor,line width=2pt] (a)--(-10,4.5) .. controls (-10,5) ..(-9.5,5)--(x00);
	\draw[mycolor,line width=2pt] (v0)--(10,4.5) .. controls (10,5) .. (9.5,5)--(x50);

\end{tikzpicture}
\end{center}
\caption{Illustration of the absorber for one vertex $a \in R$ and $r=5$ with the path, which does not contain the vertex $a$.}
\label{figure:Absorber2}
\end{figure}
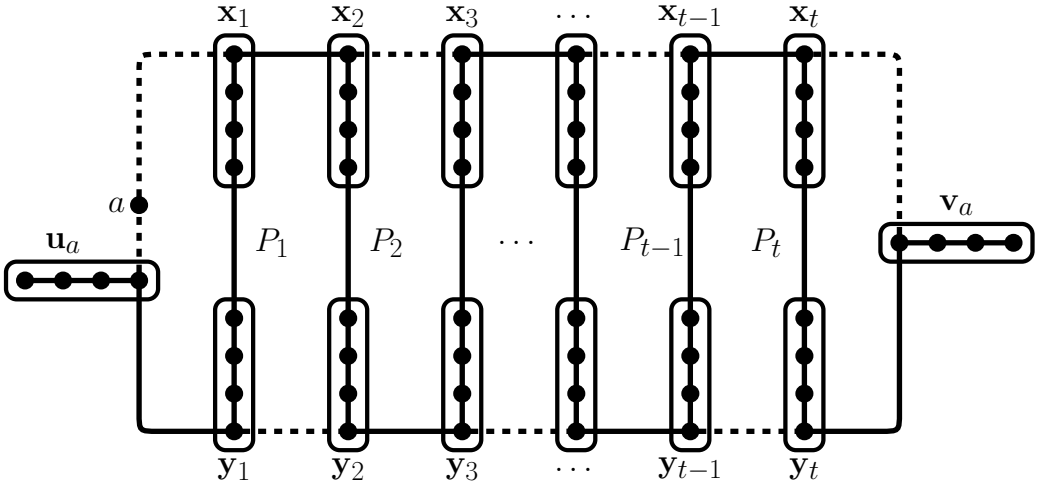

For the proof of the lemma it remains to check that we obtain the right probability and we are indeed able to apply Lemma~\ref{lem:connect} as we described.
It is immediate from the construction, that no edges of $\cE$ are contained in $R \cup \tpl{u} \cup \tpl{v}$.

In total we are performing $|R|$ many connections with spike-paths and $|R|\cdot t + |R| - 1$ many connections with tight-paths.
Thus altogether we have $o\left( p^{-1} {\log^2 n} \right)$ executions of Lemma~\ref{lem:connect} and Lemma~\ref{lem:spike}.
In each application we add $O\big(Cp^{-1}\log n\big)^{r-2}$ edges to $\cE$ in some part of $U_2$ or $U_3$.
Since each part initially contains no edges of $\cE$, provided a given part has been used at most $p^{-1}$ times the total number of edges of $\cE$ in it is $o\big(Cp^{-1}\log n\big)^{r-1}$, and therefore we can apply Lemma~\ref{lem:connect} or~\ref{lem:spike} at least one more time with that part.
Since $|U_2|$ and $|U_3|$ are of size linear in $n$, they each contain $\Omega\big(pn/\log n \big)$ parts.
Thus we can perform in total $\Omega(n/\log n)=\Omega\big(p^{-1} \log^2 n)$ applications of either Lemma~\ref{lem:connect} or Lemma~\ref{lem:spike} before all parts have been used $p^{-1}$ times and thus might acquire too many edges of $\cE$.
Since we do not need to perform that many applications, we conclude that the conditions of each of Lemma~\ref{lem:connect} and Lemma~\ref{lem:spike} are met each time we apply them.

Since the connecting lemma fails with probability at most $n^{-5}$ the construction of this absorber fails with probability at most $n^{-3}$.
In every connection there are at most $O(n^{r-1})$ steps performed and thus we need $o(n^{r-1} p^{-1} \log^2 n) = O(n^r)$ many steps for the construction of the absorber.
\qed

\section{Conclusion}

In this paper we have improved upon the best known algorithms for finding a tight Hamilton cycle in $\Gr(n,p)$:  we provide a deterministic algorithm with runtime $O(n^r)$ which for any edge probability $p\ge C(\log n)^3n^{-1}$ succeeds a.a.s.
While we give an affirmative answer to a question of Dudek and Frieze~\cite{DudFriTight} in this regime, the question remains open for $e/n\le p< C(\log n)^3n^{-1}$ for $r\ge 4$, and $1/n\ll p<C(\log n)^3n^{-1}$ for $r=3$.

Let us now turn our attention to the closely related problem of finding the $r$-th power of a Hamilton cycle in the binomial random graph $\cG(n,p)$, where $r\ge 2$.
While a general result of Riordan~\cite{riordan_2000} already shows that the threshold for $r\ge 3$ is given by $p=\Theta(n^{-1/r})$ (as observed in~\cite{KOPosa}), the threshold for $r=2$ is still open, where the best known upper bound is a polylog-factor away from the first-moment lower bound $n^{-1/2}$~\cite{nenadov2016powers}.

Since the result by Riordan is based on the second moment method it is inherently non-constructive.
By contrast, the proof in~\cite{nenadov2016powers} (for $r\ge 2$) is based on a quasi-polynomial time algorithm which for $p\ge C(\log n)^{8/r}n^{-1/r}$ finds the $r$-th power of an Hamilton a.a.s.\ in $\cG(n,p)$, and which is very similar to their algorithm for finding tight Hamilton cycles in $\Gr(n,p)$.
We think that our ideas are also applicable in this context and would provide an improved algorithm for finding $r$-th powers of Hamilton cycles in $\cG(n,p)$, though we did not check any details.

Finally, it would be interesting to know the average case complexity of determining whether an $n$-vertex $r$-uniform hypergraph with $m$ edges contains a tight Hamilton cycle.
Our results (together with a standard link between the hypergeometric and binomial random hypergraphs) show that if $m\gg n^{r-1}\log^3 n$ then a typical such hypergraph will contain a Hamilton cycle, but the failure probability of our algorithm is not good enough to show that the average case complexity is polynomial time.
For this one would need a more robust algorithm which can tolerate some `errors' at the cost of doing extra computation to determine whether the `error' causes Hamiltonicity to fail or not.


\providecommand{\bysame}{\leavevmode\hbox to3em{\hrulefill}\thinspace}
\providecommand{\MR}{\relax\ifhmode\unskip\space\fi MR }
\providecommand{\MRhref}[2]{%
	\href{http://www.ams.org/mathscinet-getitem?mr=#1}{#2}
}
\providecommand{\href}[2]{#2}

\end{document}